\newtheorem{thm}{Theorem}
\newtheorem{theorem}{Theorem}[section]
\newtheorem{prop}[theorem]{Proposition}
\newtheorem{lemma}[theorem]{Lemma}
\newtheorem{claim}{Claim}
\begin{document}

\title[Classification of Finite-Dimensional Continua]{The Complexity of the Classification Problems of Finite-Dimensional Continua}

\author{Cheng Chang}
\address{School of Liberal Arts\\ Mercy College\\555 Broadway\\ Dobbs Ferry,  NY 10522\\USA}
\email{cchang4@mercy.edu}
\author{Su Gao}
\address{Department of Mathematics\\ University of North Texas\\ 1155 Union Circle \#311430\\  Denton, TX 76203\\ USA}
\email{sgao@unt.edu}

\date{\today}
\subjclass[2010]{Primary 03E15, 54F15 ; Secondary 54H05, 54B05}
\keywords{Continuum, path-component, Borel reducible, graph isomorphism}
\thanks{Su Gao's research was supported in part by NSF grant DMS-1800323.}

\begin{abstract} We consider the homeomorphic classification of finite-dimensional continua as well as several related equivalence relations. We show that, when $n\geq 2$, the classification problem of $n$-dimensional continua is strictly more complex than the isomorphism problem of countable graphs. We also obtain results that compare the relative complexity of various equivalence relations.
\end{abstract}

\maketitle \thispagestyle{empty}

%\tableofcontents

\section{Introduction}

In \cite{CG} we determined the exact complexity of the homeomorphic classification problem of all continua, i.e., connected compact metric spaces. In this paper we consider continua that are subspaces of finite-dimensional Euclidean spaces. The framework of our study is the descriptive set theory of equivalence relations, which we briefly review below. The reader could consult \cite{G09} for more details. 

Let $X, Y$ be standard Borel spaces and $E, F$ be equivalence relations on $X, Y$, respectively. We say that $E$ is {\it Borel reducible} to $F$, denoted $E\leq_B F$, if there is a Borel function $\varphi: X\to Y$ such that for all $x, y\in X$, $xEy\iff \varphi(x)F\varphi(y)$. We say that $E$ is {\it strictly Borel reducible} to $F$, denoted $E<_B F$, if $E\leq_B F$ and $F\not\leq_B E$. $E$ is said to be {\it Borel bireducible} with $F$, denoted $E\sim_B F$, if both $E\leq_BF$ and $F\leq_B E$. If $\mathcal{C}$ is a class of equivalence relations and $F\in \mathcal{C}$, we say that $F$ is {\it universal} for $\mathcal{C}$ if for all $E\in \mathcal{C}$, we have $E\leq_B F$. 

Classification problems in mathematics can  often be viewed as equivalence relations on standard Borel spaces. In continuum theory, for instance, let $\mathcal{C}([0,1]^\mathbb{N})$ be the space of all non-empty connected closed subsets of the Hilbert cube $[0,1]^\mathbb{N}$. Then $\mathcal{C}([0,1]^\mathbb{N})$ can be viewed as the space of all continua since every continuum is homeomorphic to a subspace of the Hilbert cube. It is well-known that $\mathcal{C}([0,1]^\mathbb{N})$ is a standard Borel space. Thus the homeomorphic classification problem of all continua becomes an equivalence relation on the standard Borel space $\mathcal{C}([0,1]^\mathbb{N})$. 

The notion of Borel reducibility becomes a way to talk about the relative complexity of classification problems. If $E, F$ are classification problems with $E<_B F$, then $F$ is strictly more complex than $E$. On the other hand, if $E\sim_B F$, then $E$ and $F$ are of the same complexity.

To determine the exact complexity of an equivalence relation we often use a benchmark equivalence relation, i.e., an equivalence relation that is easy to define and which shows up frequently in research. Another important way for an equivalence relation to become a benchmark is for it to be universal in a significant class of equivalence relations. For example, Zielinski in \cite{Z} showed that the homeomorphic classification problem for all compact metric spaces is Borel bireducible with a universal orbit equivalence relation arising from a Borel action of a Polish group. We showed in \cite{CG} that the classification problem of all continua is also Borel bireducible to this equivalence relation. Because the universal orbit equivalence relation is a well-known benchmark, we have thus determined the exact complexity of these classification problems.

The benchmark equivalence relation we use in this paper is the isomorphism relation of all countable graphs, which is also known as the {\it graph isomorphism}. Formally, let $\mathcal{G}$ be the space of all graphs $(V, E)$ with $V=\mathbb{N}$. Then $\mathcal{G}\subseteq 2^{\mathbb{N}\times\mathbb{N}}$ can be shown to be a standard Borel space. The graph isomorphism is thus an equivalence relation on $\mathcal{G}$. It is well-known that the graph isomorphism is Borel bireducible to a universal orbit equivalence relation arising from a Borel action of the infinite permutation group $S_\infty$. Thus the graph isomorphism is sometimes also said to be {\it $S_\infty$-universal} (e.g. \cite{CDM}).

In this paper we will consider the homeomorphic classification problem of all subcontinua of $[0,1]^n$, which we denote by $\mathsf{C}_n$. In comparison, we will also consider the homeomorphic classification problem of all closed subsets of $[0,1]^n$, which we denote by $\mathsf{H}_n$. In addition, we consider the following equivalence relation $\mathsf{R}_n$ among all closed subsets of $[0,1]^n$. If $A, B$ are closed subsets of $[0,1]^n$, then $(A, B)\in \mathsf{R}_n$ iff there is a homeomorphism $f: [0,1]^n\to [0,1]^n$ with $f[A]=B$. 
%We will call $\mathsf{R}_n$ the {\it restricted homeomorphism problem} of compact subspaces of $[0,1]^n$.

One easily sees that $\mathsf{C}_1$ has only two equivalence classes. It is a folklore that both $\mathsf{R}_1$ and $\mathsf{H}_1$ are Borel bireducible with the graph isomorphism (we will give a proof later in this paper). When we compare the equivalence relations $\mathsf{C}_n$, $\mathsf{H}_n$ and $\mathsf{R}_n$ in terms of Borel reducibility, it is obvious that $\mathsf{C}_n\leq_B  \mathsf{H}_n$, $\mathsf{H}_n\leq_B \mathsf{H}_{n+1}$, and $\mathsf{C}_n\leq_B \mathsf{C}_{n+1}$. The following results are less obvious.

\begin{thm} The following hold for any $n$:
\begin{enumerate}
\item[\rm (1)] $\mathsf{H}_n\leq_B \mathsf{C}_{n+2}$;
\item[\rm (2)] $\mathsf{R}_n\leq_B \mathsf{C}_{n+2}$;
\item[\rm (3)] $\mathsf{R}_n\leq_B \mathsf{R}_{n+1}$.
\end{enumerate}
\end{thm}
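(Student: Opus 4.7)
All three parts are established by constructing explicit Borel reductions.

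\emph{For part (3)}, $\mathsf{R}_n\leq_B \mathsf{R}_{n+1}$: the plan is to embed $[0,1]^n$ as the bottom face of $[0,1]^{n+1}$, send $A\mapsto A\times\{0\}$, and augment with canonical ``markers'' that any self-homeomorphism of $[0,1]^{n+1}$ taking one augmented set to another must preserve. A candidate is
\[ \Psi(A) = (A\times \{0\})\cup ([0,1]^n\times \{1\})\cup (\partial([0,1]^n)\times [0,1]), \]
the original set on the bottom, the full top face, and the lateral cylinder. For the forward direction, extend a homeomorphism $f\colon [0,1]^n\to [0,1]^n$ with $f[A]=B$ by $(x,t)\mapsto (f(x),t)$; this preserves the lateral cylinder because any self-homeomorphism of $[0,1]^n$ takes $\partial([0,1]^n)$ to itself. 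For the reverse, argue that the top face is the unique full $n$-disk contained in $\Psi(A)$ (assuming $A$ has empty interior; the full-dimensional case can be handled separately), hence any ambient self-homeomorphism sending $\Psi(A)$ to $\Psi(B)$ must send top to top and therefore bottom to bottom, restricting to the desired homeomorphism of $[0,1]^n$.

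\emph{For part (2)}, $\mathsf{R}_n\leq_B \mathsf{C}_{n+2}$: attach ``hairs'' to the points of $A$ in the new dimensions,
\[ X_A = ([0,1]^n\times \{0\}\times \{0\})\cup (A\times [0,1]\times \{0\})\cup (\{(0,\ldots,0)\}\times [0,1]\times \{0\})\subseteq [0,1]^{n+2}, \]
where the last arc guarantees $X_A$ is a (non-empty) continuum for every $A$. The base $[0,1]^n\times\{0\}\times\{0\}$ is topologically recoverable inside $X_A$ as its unique maximal $n$-dimensional part, and the points of $A$ are exactly where arcs (hairs) emanate from the base. Any homeomorphism $X_A\cong X_B$ restricts to a self-homeomorphism of $[0,1]^n$ carrying $A$ to $B$, and conversely such a self-homeomorphism extends to $X_A\to X_B$ by mapping hairs to hairs.

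\emph{For part (1)}, $\mathsf{H}_n\leq_B \mathsf{C}_{n+2}$: now the continuum $\Phi(A)$ must depend only on the homeomorphism type of $A$, not on its particular embedding in $[0,1]^n$. The plan is to form a canonical cone-like continuum over $A$ enriched by a fixed distinguishing structure attached to the apex in the extra dimensions, so that (a) the apex is topologically characterized, and (b) the base fiber, canonically homeomorphic to $A$, is recoverable as a topological invariant subset of $\Phi(A)$. The two extra dimensions provide room to attach rigid markers (such as small dendrites or nested arcs of distinct types) around the apex so that the local structure of $\Phi(A)$ at a non-apex point of the cone reflects the local topology of $A$ faithfully.

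\emph{Main obstacle.} Part (1) is the crux. The naive cone construction fails: for instance, $\mathrm{Cone}([0,1])\cong \mathrm{Cone}(S^1)\cong D^2$, so $A$ is not determined by $\mathrm{Cone}(A)$. The technical challenge is to exploit the two extra dimensions to attach enough topological identifiers that $A$ is forced to appear inside $\Phi(A)$ as a subset definable purely from the topology of $\Phi(A)$, while still keeping the whole assignment $A\mapsto \Phi(A)$ Borel.
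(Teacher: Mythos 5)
There are genuine gaps in all three parts. For part (3), your map $\Psi$ is not a reduction: all information about $A\cap\partial([0,1]^n)$ is destroyed, because those points are absorbed into the lateral cylinder. Concretely, for $n=1$ take $A=\{0,\tfrac12,1\}$ and $B=\{\tfrac12\}$; then $\Psi(A)=\Psi(B)$ as sets, yet $(A,B)\notin\mathsf{R}_1$ since $|A|\neq|B|$. (For general $n$ take $A=\partial([0,1]^n)\cup\{c\}$ and $B=\{c\}$ with $c$ the center.) Moreover, $\Psi(A)$ lies entirely inside $\partial([0,1]^{n+1})$, so the one available tool --- that a self-homeomorphism of the cube preserves its boundary --- yields no information; and the claim that the top face is ``the unique full $n$-disk'' is false, since the top face together with the lateral cylinder is itself a closed $n$-disk (it is $\partial([0,1]^{n+1})$ minus the open bottom face). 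The paper's construction avoids both problems: it places a rescaled copy of the \emph{whole} face, $[\frac13,\frac23]^n\times\{0\}$, on the boundary of $[0,1]^{n+1}$ and erects the cylinder over $\frac13(A+\vec{1})$ pushed into the interior; boundary-preservation identifies the face, and the interior points of the cube identify the cylinder. It also must prove the nontrivial converse step --- extending a self-homeomorphism of $[\frac13,\frac23]^n$ to one of $[0,1]^n$ --- which your sketch omits.

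Part (2) fails already for $n=1$: with $A=\{0,1\}$ and $B=\{0\}$, both $X_A$ and $X_B$ are arcs (a hair attached at an endpoint of the base merely lengthens the arc), so they are homeomorphic while $(A,B)\notin\mathsf{R}_1$; and $n=1$ is precisely the case the paper must treat by hand, because $[0,1]$ has cut-points and Proposition~\ref{thm:J} does not apply. Your extra arc over the origin also breaks the easy direction, since a self-homeomorphism $f$ with $f[A]=B$ need not fix $\vec{0}$, so $(x,t)\mapsto(f(x),t)$ does not carry $\{\vec{0}\}\times[0,1]$ into $X_B$. The claim that the base is ``the unique maximal $n$-dimensional part'' is wrong whenever $A$ has nonempty interior, for then $A\times[0,1]$ is $(n+1)$-dimensional and the ``hairs'' over such points are not arcs. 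Finally, for part (1) you correctly identify the obstruction but give no construction, so nothing is proved; this is exactly where the paper's machinery enters. One takes $I(A,A)$ --- $A$ together with a countable set of isolated points accumulating precisely on $A$ --- and cones over it. The isolated points are topologically characterized (non-cut points admitting arbitrarily small path-connected neighborhoods), their set of limit points recovers $A$, and the cone supplies connectedness and an embedding into $[0,1]^{n+2}$. The same coning-over-isolated-points device, applied as $J([0,1]^n,A)$, is also what makes part (2) work.
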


It follows that the graph isomorphism is Borel reducible to all $\mathsf{R}_n$ and $\mathsf{H}_n$. Camerlo, Darji, and Marcone showed in \cite{CDM} that the graph isomorphism is Borel reducible to $\mathsf{C}_2$, and hence to all $\mathsf{C}_n$ for $n\geq 2$. Our main result of the paper is the following.

\begin{thm}\label{main} For any $n\geq 2$, the graph isomorphism is strictly Borel reducible to each of $\mathsf{C}_n, \mathsf{H}_n$, and $\mathsf{R}_n$.
\end{thm}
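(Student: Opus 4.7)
The Borel reducibilities graph iso $\leq_B \mathsf{C}_n, \mathsf{H}_n, \mathsf{R}_n$ for $n\ge 2$ are already in hand: Camerlo--Darji--Marcone supply graph iso $\leq_B \mathsf{C}_2$, which propagates via the monotonicities $\mathsf{C}_n\leq_B\mathsf{C}_{n+1}$ and $\mathsf{C}_n\leq_B \mathsf{H}_n$; while graph iso $\sim_B \mathsf{R}_1$ (folklore, to be proved later in the paper) together with Theorem~1(3) gives graph iso $\leq_B \mathsf{R}_n$. The substantive content of Theorem~\ref{main} is therefore the \emph{strict} direction: none of these relations reduces back to graph isomorphism. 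Using the same monotonicities, it suffices to show that $\mathsf{C}_2$ and $\mathsf{R}_2$ are each not Borel reducible to graph isomorphism (the $\mathsf{H}_n$ case then follows from $\mathsf{C}_2\leq_B\mathsf{H}_2\leq_B\mathsf{H}_n$).

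My tool would be Hjorth's turbulence theorem: an orbit equivalence relation of a turbulent Polish group action is not classifiable by countable structures, and in particular is not Borel reducible to graph isomorphism. It thus suffices to exhibit Borel reductions of the orbit equivalence relation of some turbulent action into each of $\mathsf{C}_2$ and $\mathsf{R}_2$. A convenient turbulent action is the Kechris--Sofronidis action of $c_0$ on $\mathbb{T}^\mathbb{N}$ by coordinate-wise translation.

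The reduction I would attempt assigns to each sequence $y=(y_n)\in\mathbb{T}^\mathbb{N}$ a planar continuum $K(y)\subset[0,1]^2$ consisting of a fixed \emph{backbone}---for instance, an arc with a sequence of marked attachment points $(p_n)$ accumulating at a unique cusp---to which, at each $p_n$, a small \emph{decorating} subcontinuum is attached whose homeomorphism type encodes $y_n$ modulo the group action. Borel measurability of $y\mapsto K(y)$ is routine, and the forward implication (orbit-equivalent $y,y'$ yield homeomorphic, resp.\ ambient-homeomorphic, continua) is built into the encoding. For $\mathsf{R}_2$ one uses essentially the same construction, verifying the stronger conclusion that the witnessing homeomorphism extends to an ambient homeomorphism of $[0,1]^2$.

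The main obstacle is \emph{rigidity}, i.e.\ the converse implication. One needs every homeomorphism $K(y)\to K(y')$ to preserve the backbone/decoration decomposition and the enumeration of attachment points, so that the $n$-th decoration of $K(y)$ must be mapped to the $n$-th decoration of $K(y')$; only then does $K(y)\cong K(y')$ force $y$ and $y'$ into the same $c_0$-orbit. The backbone must be homeomorphism-distinguishable (e.g.\ as the unique one-dimensional subcontinuum with a specified cusp) and the decorations so designed that the indexing is recoverable from the accumulation structure of attachment points. For $\mathsf{R}_2$ the same must hold for every ambient homeomorphism of $[0,1]^2$; here planarity is a genuine aid, since the cyclic order of decorations along the backbone is preserved up to orientation, and the individual decorations can be rendered orientation-rigid (via asymmetric ``hooks'') to kill the remaining ambiguity. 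Once rigidity is established, Hjorth's theorem yields $\mathsf{C}_2, \mathsf{R}_2\not\leq_B$ graph isomorphism and Theorem~\ref{main} follows.
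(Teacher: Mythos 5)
Your overall architecture agrees with the paper's: the positive reductions are assembled exactly as you describe, and the strict direction is proved by exhibiting a Borel reduction of a turbulent orbit equivalence relation into $\mathsf{C}_2$ (and, via the same map, into $\mathsf{R}_2$), then invoking Hjorth's theorem. However, there is a genuine gap at the heart of your proposed reduction. You want to attach at the $n$-th marked point a decoration ``whose homeomorphism type encodes $y_n$ modulo the group action,'' and you then argue that rigidity forces the $n$-th decoration of $K(y)$ onto the $n$-th decoration of $K(y')$. But the orbit equivalence relation of $c_0$ acting on $\mathbb{T}^{\mathbb{N}}$ is the asymptotic condition $y_n-y_n'\to 0$; it is not a coordinatewise condition, and a single coordinate has no meaningful ``$y_n$ modulo the group action.'' If the homeomorphism type of the $n$-th decoration determines $y_n$ (even up to a countable partition of $\mathbb{T}$), then under your rigidity $K(y)\cong K(y')$ forces $y_n$ to match $y_n'$ coordinatewise, which is strictly finer than the orbit relation --- so the forward implication, which you describe as ``built into the encoding,'' in fact fails: orbit-equivalent sequences with $y_n\neq y_n'$ would yield non-homeomorphic continua. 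What your construction would reduce is (essentially) equality of sequences, which is smooth and yields no contradiction with classifiability by countable structures. Conversely, one cannot inject a circle's worth of parameter values into homeomorphism types of planar decorations in a way that makes ``homeomorphic'' mean ``difference tending to zero.''

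The paper circumvents exactly this by choosing Hjorth's turbulent action of $G_0=\{\vec{x}\in\mathbb{Z}^{\mathbb{N}}:x_n/n\to 0\}$ on $\mathbb{Z}^{\mathbb{N}}$ by translation, so the coordinates are integers, and by making all the $n$-th decorations mutually homeomorphic: the $n$-th ``stripe'' is a bi-infinite column of rectangles $\partial R_{n,k}$ with exactly one rectangle filled in, the filled one recording $x_n$, and the stripes accumulate at a topologically distinguished fixed point $(0,\tfrac12)$ of the T-shaped component $I_0$. A homeomorphism $F(\vec{x})\to F(\vec{y})$ is then \emph{permitted} to shift each column, the combinatorial rigidity analysis (via cut-points and path-components) shows the shift on the $n$-th stripe is exactly $y_n-x_n$, and continuity at the accumulation point forces $|y_n-x_n|/n\to 0$. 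It is this positional, asymptotic encoding --- decorations identical in homeomorphism type, information carried by a shift that is pinned down only in the limit --- that your plan is missing; without it, or some substitute mechanism, the reduction does not capture a turbulent relation and the argument does not go through.
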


In particular, Theorem~\ref{main} tells us that it is impossible to assign a countable graph (or any countable structure) as a complete homeomorphic invariant for a finite-dimensional continuum if the dimension is at least $2$.

\section{Preliminaries}

Our standard references for notation and terminology are \cite{K} and \cite{G09}. 

Recall that a {\em Polish space} is a separable, completely metrizable topological space. A {\em standard Borel space} is a pair $(X, \mathfrak{B})$, where $X$ is a set and $\mathfrak{B}$ is a $\sigma$-algebra of subsets of $X$, such that $\mathfrak{B}$ is the $\sigma$-algebra generated by some Polish topology on $X$. If $(X, \mathfrak{B})$ is a standard Borel space we refer to elements of $\mathfrak{B}$ as {\em Borel sets}. As usual, if $(X,\mathfrak{B})$ is a standard Borel space and the collection $\mathfrak{B}$ is clear from the context, we will say that $X$ is a standard Borel space. It is natural to view any Polish space as a standard Borel space.

If $X$ and $Y$ are standard Borel spaces, a function $f: X\to Y$ is {\em Borel (measurable)} if for any Borel $B\subseteq Y$, $f^{-1}(B)\subseteq X$ is Borel. 

Given any Polish space $X$, the {\em Effros Borel space} $\mathcal{F}(X)$ is the space of all non-empty closed subsets of $X$ with the $\sigma$-algebra generated by the sets of the form
\[
\{F\in \mathcal{F}(X): F\cap U\neq \emptyset\},
\]
where $U\subseteq X$ is open. It is a standard Borel space. 

Given any Polish space $X$, let $\mathcal{C}(X)$ be the subspace of $\mathcal{F}(X)$ consisting of all connected compact subsets of $X$. Then $\mathcal{C}(X)$ is again a standard Borel space. 

We can regard $\mathsf{H}_n$ and $\mathsf{R}_n$ to be equivalence relations on $\mathcal{F}([0,1]^n)$ and $\mathsf{C}_n$ an equivalence relation on $\mathcal{C}([0,1]^n)$. 

For our constructions and proofs we will need the following basic notation and terminology in continuum theory. For unexplained notation and terminology our standard reference is \cite{W}. 

Let $X$ be a connected topological space. An element $x\in X$ is a {\em cut-point} of $X$ if $X-\{x\}$ is disconnected. If $x$ is not a cut-point of $X$, it is a \emph{non-cut point} of $X$. Cut-points are preserved by homeomorphisms, but not necessarily by continuous maps.

If $X$ is a topological space and $x, y\in X$, a {\em path} from $x$ to $y$ is a continuous function $f: [0,1]\to X$ such that $f(0)=x$ and $f(1)=y$. When there is no danger of confusion, we also refer to the graph of such an $f$ as a path. Define $x\sim y$ iff there is a path from $x$ to $y$, for any $x, y\in X$. Then $\sim$ is an equivalence relation, and its equivalence classes are the {\em path-components} of $X$. $X$ is {\em path-connected} if it has only one path-component, or equivalently, if there is a path from $x$ to $y$ for any $x, y\in X$. 

Let $X$ be a path-connected space. We call an element $x\in X$ a {\em path-cut-point} if $X-\{x\}$ is no longer path-connected. Note that path-cut-points are also preserved by homeomorphisms.

\section{Comparing $\mathsf{C}_n$, $\mathsf{H}_n$ and $\mathsf{R}_n$}

We establish in this section the results comparing various homeomorphism problems in terms of Borel reducibility. We will use two constructions in \cite{CG} and \cite{Z} for coding a closed subset (or a sequence of closed subsets) of a compact metric space into the homeomorphism type of a continuum. We briefly describe these two constructions first.

\subsection{The construction of $I(X,A)$}
Let $X$ be a compact metric space and $A\subseteq X$ be a closed subspace containing all isolated points of $X$. Let $\mathcal{D}_{X,A}$ be the collection of $D\subseteq X\times (0,1]$ which is a nonempty set of isolated points so that $\overline{D}-D=A\times\{0\}$. If $D\in \mathcal{D}_{X,A}$ and $A\neq\emptyset$, then the set $D$ is necessarily countably infinite. For any $D\in\mathcal{D}_{X,A}$ let $I(X, A; D)=X\times\{0\}\cup D$. Being a closed subspace of $X\times [0,1]$, $I(X, A; D)$ is still a compact metric space. From \cite{CG} and \cite{Z}, we know that $\mathcal{D}_{X,A}$ is not empty, and that all the $I(X,A; D)$ are homeomorphic as $D\in\mathcal{D}_{X,A}$ varies. Thus, we simply write $I(X,A)$ for any $I(X,A; D)$ for $D\in \mathcal{D}_{X,A}$. If $A$ is empty, we let $I(X, A)=I(X, A; D)$ where $D$ is a singleton.

It now follows that $I(X,A)$ is a coding space for the homeomorphism type of pairs $(X, A)$ where $X$ is a compact metric space and $A\subseteq X$ is a closed subspace.

\begin{prop}[\cite{CG}] \label{thm:Z}
Let $X, Y$ be compact metric spaces, and $A\subseteq X$ and $B\subseteq Y$ be closed subspaces containing all isolated points of $X$ and $Y$, respectively. Then the following are equivalent:
\begin{enumerate}
\item[\rm (i)] $(X, A)\cong (Y, B)$, i.e., there is a homeomorphism $f: X\to Y$ with $f[A]=B$.
\item[\rm (ii)] $I(X, A)$ and $I(Y, B)$ are homeomorphic.
\end{enumerate}
\end{prop}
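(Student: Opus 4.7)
The plan is to verify both implications separately, relying on the fact recalled just before the statement that $I(X,A)$ is determined up to homeomorphism by the pair $(X,A)$ independently of the choice of $D\in \mathcal{D}_{X,A}$. For (i) $\Rightarrow$ (ii), given a homeomorphism $f\colon X\to Y$ with $f[A]=B$, I would fix any $D\in\mathcal{D}_{X,A}$ and set $D'=(f\times\mathrm{id})[D]$. Since $f\times \mathrm{id}$ is a homeomorphism of $X\times [0,1]$ onto $Y\times [0,1]$, the set $D'$ is a nonempty collection of isolated points of $Y\times (0,1]$, and $\overline{D'}\setminus D'=f[A]\times\{0\}=B\times\{0\}$, so $D'\in\mathcal{D}_{Y,B}$. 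The restriction of $f\times\mathrm{id}$ to $I(X,A;D)$ is then a homeomorphism onto $I(Y,B;D')$, and invariance of the construction under the choice of the discrete set gives $I(X,A)\cong I(Y,B)$. The degenerate case $A=\emptyset$ is handled by the same recipe with a singleton $D$.

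For (ii) $\Rightarrow$ (i), the strategy is to identify both $X\times\{0\}$ and $A\times\{0\}$ as topologically definable subsets of $I(X,A;D)$ and transport them across an arbitrary homeomorphism $h\colon I(X,A)\to I(Y,B)$. The first step is to observe that $D$ is precisely the set of isolated points of $I(X,A;D)$: points of $D$ are isolated by the choice of $D$; each $(a,0)$ with $a\in A$ is a limit of $D$ because $\overline{D}\setminus D=A\times\{0\}$; and each $(x,0)$ with $x\in X\setminus A$ is a limit of $X\times\{0\}$ because the hypothesis that $A$ contains all isolated points of $X$ forces $x$ to be a non-isolated point of $X$. Consequently $X\times\{0\}$ is the set of non-isolated points of $I(X,A;D)$, and $A\times\{0\}$ is exactly the set of non-isolated points every neighborhood of which meets the set of isolated points. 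These characterizations are purely topological, so $h$ carries $X\times\{0\}$ onto $Y\times\{0\}$ and $A\times\{0\}$ onto $B\times\{0\}$; projecting to the first coordinate then yields the required homeomorphism $f\colon X\to Y$ with $f[A]=B$.

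The only step with genuine content is the verification that $D$ coincides with the isolated points of $I(X,A;D)$, and this is also the unique place in the argument where the assumption that $A$ contains all isolated points of $X$ is used. Without that assumption, points of $X\times\{0\}$ corresponding to isolated points of $X\setminus A$ would be extra isolated points of $I(X,A;D)$, and one could no longer recover $X\times\{0\}$ as the non-isolated part. Everything else reduces to routine checks about restrictions of homeomorphisms and closures in product spaces.
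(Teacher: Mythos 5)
Your proof is correct. The paper cites \cite{CG} for this proposition rather than proving it, but your argument is the standard one and matches what the authors themselves rely on elsewhere: direction (ii)$\Rightarrow$(i) via the topological characterization of $D$ as the set of isolated points of $I(X,A;D)$ (so that $X\times\{0\}$ is the set of non-isolated points and $A\times\{0\}$ the set of non-isolated limits of isolated points, exactly as in Lemma~\ref{3L1}), and direction (i)$\Rightarrow$(ii) by transporting $D$ via $f\times\mathrm{id}$ and invoking the stated independence of $I(X,A;D)$ from the choice of $D\in\mathcal{D}_{X,A}$.
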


\subsection{The construction of $J(X,A)$}
Let $X$ be a compact metric space. We define the {\em fan space} $F_X$ of $X$ as the quotient of $X\times[0,1]$ by the equivalence relation $\sim$ defined as
$$ (x, s)\sim (y, t)\iff (x, s)=(y, t) \mbox{ or } s=t=1. $$
The point $[(x,1)]_\sim$ in $F_X$ is a distinguished point; we denote it by $\alpha_X$ and call it the {\em apex}. $X$ can be viewed, again in a canonical way, as a subspace of $F_X$.

$F_X$ is obviously compact. We note that it can be given a canonical metric:
\begin{equation*}\label{E:1} d_F((x,s), (y,t))=2|s-t|+(1-\max\{s,t\})\rho(x,y), 
\end{equation*}
where $\rho<1$ is a compatible metric on $X$. $F_X$ is also clearly a path-connected space: for every point $(x,s)$ there is a canonical path $P$ from $(x, s)$ to $\alpha_X$, namely, 
$$ P(\tau)=(x, s+\tau(1-s)) \mbox{ for $\tau\in[0,1]$}. $$
Therefore $F_X$ is a path-connected continuum. 

Next we code pairs $(X, A)$. Given a compact metric space $X$ and a closed subspace $A\subseteq X$, define $F(X, A)$ as a subspace of the fan space $F_X$:
$$ F(X, A)=\{ [(x,s)]_\sim\in F_X\,:\, s=0 \mbox{ or } x\in A\}. $$
Alternatively, we consider the equivalence relation $\sim$ defined above, restricted to the space 
$$ (X\times\{0\})\cup (A\times [0,1]). $$
$F(X,A)$ is the again the quotient space given by $\sim$.

There is again a canonical homeomorphic copy of $X$ in $F(X, A)$, namely $X\times\{0\}$, and a canonical homeomorphic copy of $F_A$ in $F(X, A)$. It is easy to see that if $X$ is (path-)connected, then so is $F(X,A)$. 

The next coding space $J(X,A)$ is based on the space $I(X, A)$. Write $I(X, A)=X\cup D$, where $D$ is the set of all isolated points in $I(X, A)$. Note that $\overline{D}=D\cup A$. We define $$J(X, A)=F(I(X, A), \overline{D}).$$

\begin{prop}[\cite{CG}]\label{thm:J} Let $X, Y$ be continua without cut-points and $A, B$ be closed subspaces of $X, Y$ respectively. Then the following are equivalent:
\begin{enumerate}
\item[\rm (i)] $(X, A)\cong (Y, B)$.
\item[\rm (ii)] $J(X, A)$ and $J(Y, B)$ are homeomorphic.
\end{enumerate}
\end{prop}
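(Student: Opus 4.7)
My plan is to treat the two directions separately, invoking Proposition~\ref{thm:Z} at the end to finish the harder direction.

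The direction (i) $\Rightarrow$ (ii) is routine functoriality. Given $f\colon X\to Y$ with $f[A]=B$, I extend $f$ to a homeomorphism $\tilde f\colon I(X,A)\to I(Y,B)$ that sends a chosen $D\in\mathcal{D}_{X,A}$ bijectively to some $D'\in\mathcal{D}_{Y,B}$; this is possible precisely because all the spaces $I(X,A;D)$ for $D\in\mathcal{D}_{X,A}$ are mutually homeomorphic, as recalled in Section~3.1. The map $\tilde f$ sends $\overline{D}$ to $\overline{D'}$, and since the construction $F(-,-)$ is manifestly functorial, it lifts to a homeomorphism $J(X,A)=F(I(X,A),\overline{D})\to F(I(Y,B),\overline{D'})=J(Y,B)$.

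For the nontrivial direction (ii) $\Rightarrow$ (i), let $h\colon J(X,A)\to J(Y,B)$ be a homeomorphism. The crux is to characterize the apex $\alpha_X$ topologically: I claim it is the unique point $p\in J(X,A)$ such that $J(X,A)\setminus\{p\}$ has infinitely many connected components. For $p=\alpha_X$, each arc $\{d\}\times[0,1)$ (with $d\in D$) is clopen in $J(X,A)\setminus\{\alpha_X\}$ because $d$ is an isolated point of $I(X,A)$, so these yield infinitely many small components alongside the single ``big'' component $F(X,A)\setminus\{\alpha_X\}$. For any other point $p$, the apex remains in the complement and every $\overline{D}$-fiber still reaches it through its top endpoint, so every fiber lies in the same component as $\alpha_X$. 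A short case analysis, leveraging the no-cut-point hypothesis on $X$ to control what happens when $p$ lies in the base or in the interior of an $A$-fiber, then shows that the complement splits into only finitely many components. Hence $h$ must send $\alpha_X$ to $\alpha_Y$.

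With the apex pinned down, $h$ preserves the component decomposition of the complement: it induces a bijection $D_X\leftrightarrow D_Y$ via the unique base endpoints of the small arc components, and a homeomorphism between the big components. Taking closures in $J(X,A)$, the union of small components recovers the cone $F_{\overline{D_X}}$, so $h$ carries $F_{\overline{D_X}}$ onto $F_{\overline{D_Y}}$ and in particular matches $\overline{D_X}\times\{0\}$ with $\overline{D_Y}\times\{0\}$, with $A\times\{0\}$ picked out as $(\overline{D_X}\setminus D_X)\times\{0\}$. The base $X\times\{0\}$ is then recovered as the disjoint union $(X\setminus A)\times\{0\}\sqcup A\times\{0\}$, where the first summand equals the set-theoretic difference (big component) $\setminus F_{\overline{D_X}}$ by a direct computation and the second is already identified. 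Therefore $h$ restricts to a homeomorphism $I(X,A)\times\{0\}\to I(Y,B)\times\{0\}$ sending $\overline{D_X}$ to $\overline{D_Y}$, and Proposition~\ref{thm:Z} then delivers $(X,A)\cong(Y,B)$.

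The main obstacle will be the apex characterization: one must verify that removing any non-apex point produces only finitely many components. The delicate cases are points on $\overline{D}\times\{0\}$ and interior points of $A$-fibers, near which the discrete $D$-fibers accumulate; here the retention of $\alpha_X$ in $J(X,A)\setminus\{p\}$ keeps every $D$-fiber attached to the same component through its top endpoint, and the no-cut-point hypothesis on $X$ rules out extra components being spawned within the base.
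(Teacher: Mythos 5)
Your strategy is sound and is essentially the one the paper itself relies on: Proposition~\ref{thm:J} is quoted from \cite{CG} without proof here, but the argument given for $J([0,1],A)$ in the proof of Theorem~\ref{RC} is exactly your plan --- pin down the apex as the unique point whose removal leaves infinitely many components, sort the remaining components into the discrete arcs $\{d\}\times[0,1)$ and the single ``big'' one, recover $D\times\{0\}$ from the distinguished endpoints of the arcs, take closures to capture $A\times\{0\}$, peel off the base, and finish with Proposition~\ref{thm:Z}. Your choice of connected components over path-components is in fact the right one for a general continuum $X$ (which need not be path-connected), and you correctly locate the only place the no-cut-point hypothesis is used.

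Two points need patching. First, the apex characterization fails outright when $A=\emptyset$: there $D$ is a singleton, $J(X,\emptyset)$ is the disjoint union of $X$ and an arc (so it is not even connected), and no point has infinitely many complementary components; this case must be handled separately (easily, by matching components of the disjoint union). Relatedly, if $X$ is a one-point continuum the ``big'' component is itself homeomorphic to $[0,1)$, i.e., to the small ones. Second, even generically you should justify that $h$ sends the big component to the big component rather than to one of the arcs; the clean reason is that the big component is the unique component that is not open in $J(X,A)\setminus\{\alpha_X\}$, since every point of the cone over $A\times\{0\}$ is a limit of points of the $D$-fibers (because $\overline{D}\setminus D=A\times\{0\}$), and $h$ preserves openness of components. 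With these repairs the argument goes through.
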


\subsection{Comparing $\mathsf{C}_{n}$ and $\mathsf{H}_{n}$}
In this subsection we compare the complexities of $\mathsf{C}_{n}$ and $\mathsf{H}_{n}$. It is obvious that $\mathsf{C}_n\leq_B  \mathsf{H}_n$, $\mathsf{H}_n\leq_B \mathsf{H}_{n+1}$, and $\mathsf{C}_n\leq_B \mathsf{C}_{n+1}$. Our objective is to show that $\mathsf{H}_n\leq_B \mathsf{C}_{n+2}$ for all $n$. These results can be summarized in the following diagram, where a Borel reducibility claim $E\leq_B F$ is represented by an arrow $E\to F$:
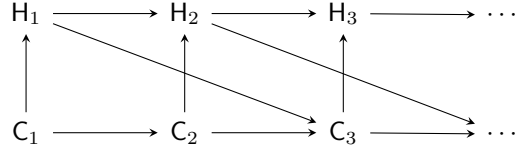
\begin{figure}[h]
\centering
\begin{tikzpicture}
  \matrix (m) [matrix of math nodes,row sep=3em,column sep=4em,minimum width=2em]
  {
     \mathsf{H}_1 & \mathsf{H}_2 & \mathsf{H}_3 & \cdots \\
     \mathsf{C}_1 & \mathsf{C}_2& \mathsf{C}_3& \cdots \\};
  \path[-stealth]
  (m-1-1) edge (m-1-2)
          edge     (m-2-3)
          
  (m-1-2) edge (m-1-3)
            edge (m-2-4)
    (m-1-3) edge (m-1-4)
           
    (m-2-1) edge (m-1-1)
            edge (m-2-2)
    (m-2-2) edge (m-1-2)
            edge (m-2-3)
    (m-2-3) edge (m-1-3)
            edge (m-2-4);
\end{tikzpicture}
\caption{Reductions between $\mathsf{H}_{n}$ and $\mathsf{C}_{n}$. }
\label{Reduction1}
\end{figure}

\newpage
\begin{theorem}\label{HC}
$\mathsf{H}_{n}\leq_{B} \mathsf{C}_{n+2}$ for all $n\geq 1$.
\end{theorem}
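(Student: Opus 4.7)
The plan is to construct a Borel map $\varphi \colon \mathcal{F}([0,1]^n)\to\mathcal{C}([0,1]^{n+2})$ with $A\cong B$ iff $\varphi(A)\cong\varphi(B)$. I would take $\varphi(A)$ to be a triple cone (the topological join of $A$ with a three-point set) embedded explicitly in $[0,1]^{n+2}$. Concretely, place $A$ at the ``equator'' $A\times\{(1/2,1/2)\}\subseteq[0,1]^n\times[0,1]^2$, fix three apex points $p_1=(\vec c,0,0)$, $p_2=(\vec c,1,0)$, $p_3=(\vec c,0,1)$ with $\vec c=(1/2,\ldots,1/2)$, and let
\[
\varphi(A)=\bigcup_{i=1}^{3}\bigcup_{a\in A}\bigl[(a,1/2,1/2),\,p_i\bigr],
\]
where $[x,y]$ denotes the straight line segment between $x$ and $y$. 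A direct coordinate check shows that distinct segments meet only at their common endpoints, so $\varphi(A)$ is a topological copy of the abstract join $A*\{p_1,p_2,p_3\}$; it is compact and path-connected (every $a\in A$ reaches each apex by a segment, and any two apexes are linked through any $a$), hence a continuum in $[0,1]^{n+2}$. Borelness of $\varphi$ is immediate from the explicit formula.

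The forward direction $A\cong B\Rightarrow\varphi(A)\cong\varphi(B)$ is easy: extend any homeomorphism $f\colon A\to B$ by fixing apexes and acting parametrically on each cone segment. The main content is the converse: any homeomorphism $\varphi(A)\to\varphi(B)$ must restrict to a homeomorphism $A\to B$. The strategy is to characterize both the apex set $\{p_1,p_2,p_3\}$ and the equator $A$ as topologically invariant subsets of $\varphi(A)$. The apex points are distinguishable by maximal ``branching'': every arc from a non-apex point in $\varphi(A)$ extends in a unique way to some $p_i$, so each $p_i$ is an endpoint of a family of pairwise interior-disjoint arcs indexed by $A$, which for infinite $A$ dwarfs the local branching at any equator point. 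Once the three apexes are identified, $A$ is recovered as the set of points $x\in\varphi(A)$ at which three pairwise-disjoint arcs inside $\varphi(A)$ meet, each terminating at a different apex.

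The main obstacle will be a rigorous local characterization of the equator, which I expect to establish through a product neighborhood description: every $x\in A$ has arbitrarily small neighborhoods in $\varphi(A)$ homeomorphic to $N_x\times T$, where $N_x$ is a neighborhood of $x$ in $A$ and $T$ is the standard tripod (three half-open intervals joined at a common endpoint), while no point of $\varphi(A)\setminus(A\cup\{p_1,p_2,p_3\})$ admits any such neighborhood. Given this, $A$ is manifestly a topological invariant of $\varphi(A)$, and the restriction of any homeomorphism of the triple cones yields the required reduction. Small cases $|A|\le 2$, where the branching orders may coincide and the local product description degenerates, should be handled separately by an explicit analysis of the resulting finite graphs; they land in a Borel-distinguishable region of $\mathcal{C}([0,1]^{n+2})$ so the reduction can be patched by cases.
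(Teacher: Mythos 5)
Your construction (the triple join $A*\{p_1,p_2,p_3\}$) is genuinely different from the paper's, and the step on which everything hinges --- a topological characterization of the equator copy of $A$ inside $\varphi(A)$ --- is exactly where the proposal breaks down. Both characterizations you offer fail as soon as $A$ is not totally path-disconnected, which is the typical case for $n\geq 2$. First, the ``three pairwise-disjoint arcs to the three apexes'' criterion: take $A=[0,1]\times\{0\}^{n-1}$ and a point $q$ in the interior of the segment from $(a,1/2,1/2)$ to $p_1$ with $a=(1/2,0,\dots,0)$. One arc from $q$ runs up to $p_1$; a second slides through the sheet $\{[x,1,t]\}$ to the equator point over some $a'<a$ and then up to $p_2$; a third does the same on the other side via some $a''>a$ and reaches $p_3$. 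These are pairwise disjoint away from $q$, so $q$ passes your test without being an equator point. Second, the local product criterion: if $A$ contains a point $a$ whose neighborhood in $A$ is a tripod (e.g.\ $A$ itself a tripod in $[0,1]^2$), then an interior point of a cone segment over $a$ has arbitrarily small neighborhoods homeomorphic to $T\times(0,1)\cong(0,1)\times T$ with the point sitting on the branch locus, so it too is indistinguishable from an equator point by that test. The apex characterization is similarly unsubstantiated: when $A$ is, say, a disk, every equator point is also the endpoint of uncountably many pairwise interior-disjoint arcs, so ``maximal branching'' does not single out $\{p_1,p_2,p_3\}$. Beyond the missing characterization, there is a real question of whether the triple join is even a complete invariant: the single cone already fails ($\mathrm{cone}(S^1)$ and $\mathrm{cone}([0,1])$ are both disks), and while extra cone points repair the easy examples, you give no argument ruling out non-homeomorphic $A,B\subseteq[0,1]^n$ with homeomorphic triple joins.

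The paper avoids all of this by not coning over $A$ directly. It first replaces $A$ by $I(A,A)=(A\times\{0\})\cup D_A$, where $D_A$ is a countable set of isolated points accumulating exactly on $A\times\{0\}$, and then takes a single cone $\tilde A=F_{I(A,A)}$ over that. The isolated points are the recoverable handle: Lemma~\ref{3L1} characterizes $D_A\times\{0\}$ intrinsically (the non-cut points admitting arbitrarily small path-connected open neighborhoods), and $A\times\{0\}^2$ is then recovered as $\overline{D_A\times\{0\}}\setminus(D_A\times\{0\})$, so any homeomorphism $\tilde A\to\tilde B$ restricts to one from $A$ to $B$ with no case analysis on the structure of $A$. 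If you want to salvage your approach, you would need to build a comparable handle into the equator (or prove, for arbitrary compacta in $[0,1]^n$, a rigidity theorem for triple joins), and as it stands no such argument is present.
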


The rest of this subsection is devoted to a proof of Theorem~\ref{HC}.

Given any non-empty closed subset $A\subseteq [0,1]^{n}$, consider 
\begin{equation*}
\tilde{A}:=J(A, A)=F_{I(A,A)}.
\end{equation*}
Arbitrarily fix a countable set $D_A\subseteq A\times (0,1]\subseteq [0,1]^n\times(0,1]$ so that $\overline{D_A}-D_A=A\times \{0\}$.  Then $I(A,A)=\overline{D_A}=(A\times\{0\})\cup D_A$. For notational simplicity, we denote the apex of $F_{I(A,A)}$ by $a^*$.

Note that $\tilde{A}$ is a quotient space of $[0,1]^{n+2}$. In the next lemma, we show that it can be embeded as a subspace of $[0,1]^{n+2}$.

\begin{lemma}\label{R:1}
$\tilde{A}$ is homeomorphic to a subspace of $[0,1]^{n+2}$.
\end{lemma}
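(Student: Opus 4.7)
The plan is to realize $\tilde A$ as the geometric cone over $I(A,A)$ inside $[0,1]^{n+2}$. Since $A\subseteq[0,1]^n$, the set $D_A$ lies in $[0,1]^n\times(0,1]$, so
\[
I(A,A)=\overline{D_A}\subseteq [0,1]^n\times[0,1]=[0,1]^{n+1};
\]
in particular $I(A,A)$ is a compact subset of the $(n+1)$-cube. I will embed the ``abstract'' cone $F_{I(A,A)}$ into $[0,1]^{n+2}$ by using the one extra coordinate to parametrize a radial shrinking towards a fixed interior point.

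Concretely, fix any point $c\in[0,1]^{n+1}$ (for instance $c=(\tfrac12,\dots,\tfrac12)$), and define
\[
\Phi\colon I(A,A)\times[0,1]\longrightarrow [0,1]^{n+1}\times[0,1]=[0,1]^{n+2},\qquad \Phi(x,t)=\bigl((1-t)x+tc,\,t\bigr).
\]
This map is continuous, and its image lies in $[0,1]^{n+2}$ because the first coordinate is a convex combination of points of the $(n+1)$-cube. On the slice $t=1$ it collapses all of $I(A,A)\times\{1\}$ to the single point $(c,1)$, which is exactly the identification imposed by the equivalence relation $\sim$ used to define $F_{I(A,A)}$. For $t<1$ the affine map $x\mapsto(1-t)x+tc$ is injective on $[0,1]^{n+1}$, and the last coordinate $t$ separates points coming from distinct slices; hence $\Phi(x,s)=\Phi(y,t)$ forces either $s=t=1$ (which is identified under $\sim$) or $s=t<1$ and $x=y$. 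Therefore $\Phi$ respects, and only respects, the relation $\sim$, so it factors through the quotient map $I(A,A)\times[0,1]\to F_{I(A,A)}=\tilde A$ to give a continuous injection $\widetilde\Phi\colon \tilde A\to[0,1]^{n+2}$.

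Finally, because $I(A,A)$ is compact, $\tilde A=F_{I(A,A)}$ is compact as a continuous image of $I(A,A)\times[0,1]$, and a continuous injection from a compact space into the Hausdorff space $[0,1]^{n+2}$ is automatically a closed embedding. This gives the desired homeomorphic copy of $\tilde A$ inside $[0,1]^{n+2}$. There is no substantial obstacle here; the only point requiring attention is verifying that the identifications forced by $\Phi$ coincide exactly with $\sim$, and this is immediate once one retains the height $t$ as an honest coordinate of the target so that no accidental collapsing occurs below the apex.
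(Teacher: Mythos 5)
Your proof is correct and takes essentially the same route as the paper: both realize $\tilde A$ as a geometric cone over $I(A,A)$ inside $[0,1]^{n+2}$ using convex combinations toward an apex, with injectivity off the apex guaranteed by the extra coordinate, and conclude via the fact that a continuous bijection (here, injection) from a compact space into a Hausdorff space is a homeomorphism onto its image. The only cosmetic difference is that you keep the cone parameter $t$ as an honest last coordinate, whereas the paper recovers it from the height of the apex $a'^{\ast}$; your version makes the injectivity check slightly more transparent.
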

\begin{proof}
We construct a $\tilde{A}'\subseteq [0,1]^{n+2}$: first embed $I(A,A)=(A\times\{0\})\cup D_{A}$ into $[0,1]^{n+2}$ as $(A\times\{(0,0)\})\cup (D_{A}\times\{0\})$ (called the ``floor'' points); then add an arbitrary point $a'^{\ast} \in [0,1]^{n+1}\times (0,1]$, and connect all the ``floor'' points to $a'^{\ast}$ by straight lines. The set $\tilde{A}'$ is obviously a subset of $[0,1]^{n+2}$, and all the points in $\tilde{A}'$ can be uniquely written as
\[
(1-\lambda ){x}+\lambda a'^{\ast}
\]
for some ${ x}\in I(A,A)\times\{0\}$ and $\lambda\in [0,1]\}$.

Define $\pi: \tilde{A} \rightarrow \tilde{A}'$ by $\pi(x,\lambda)=(1-\lambda) x+\lambda a'^{\ast}$ for $x\in \overline{D}, \lambda\in [0,1]$. Then $\pi$ is a continuous bijection, and thus a homeomorphism.
\end{proof}

%\begin{remark}\label{R:2}
%The $\tilde{A}$ and $\tilde{A}'$ constructed in these two ways are homeomorphic to each other, so without any confusion, we can use the same notation. Similarly, we can embed $J([0,1]^{n},A)$ as a subspace of $[0,1]^{n+2}$ as well. 
%\end{remark}

Next we state a topological property that separates points of $D_{A}\times\{0\}$ from the other points in $\tilde{A}$.

\begin{lemma}\label{3L1}
Let $p\in \tilde{A}$. Then $p\in D_A\times\{0\}$ iff the following topological property holds for $p$:
\begin{quote}
$p$ is a non-cut point, and for all open neighborhood $V$ of $p$, there exists an open subset $U\subseteq V$ such that ${p}\in U$ and $U$ is path-connected.
\end{quote}
\end{lemma}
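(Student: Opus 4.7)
The plan is to do a case analysis on $p\in\tilde{A}$, using the explicit fan metric $d_F((x,s),(y,t))=2|s-t|+(1-\max\{s,t\})\rho(x,y)$ and one general principle about paths in $\tilde{A}=F_{I(A,A)}$: on the open set $\{\tau:\gamma(\tau)\neq a^*\}$ the first coordinate $y(\tau)$ of any path $\gamma$ is continuous into $I(A,A)$, so any path between points whose first coordinates lie in different connected components of $I(A,A)$ must pass through the apex $a^*$. Because each $d\in D_A$ is isolated in $I(A,A)$, the singleton $\{d\}$ is a connected component, and this principle will be the engine of both directions.

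For the forward implication, fix $p=(d,0)$ with $d\in D_A$. To see that $p$ is a non-cut point I will connect each other $q=(y,t)\in\tilde{A}\setminus\{p\}$ to $a^*$ via the canonical path $\tau\mapsto(y,t+\tau(1-t))$: if $y\neq d$ this path stays on a different ray, while if $y=d$ we have $t>0$, so the path lies in $\{d\}\times[t,1]$ and avoids $(d,0)$. For arbitrarily small path-connected neighborhoods I will use the isolation of $d$ to pick $\delta>0$ with $\rho(d,y)\geq\delta$ for all $y\neq d$ in $I(A,A)$; the inequality $2t+(1-t)\rho(d,y)\geq\delta$, valid for $y\neq d$ since $\delta\leq 1<2$, then forces $B(p,\epsilon)=\{d\}\times[0,\epsilon/2)$ for every $\epsilon<\delta$, which is homeomorphic to a half-open interval and hence path-connected.

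For the converse I argue by contrapositive; the points $p\notin D_A\times\{0\}$ split into four types, of which two are cut points and two fail the neighborhood condition. If $p=a^*$, removing it leaves $I(A,A)\times[0,1)$, which is disconnected because $A$ is non-empty and any $d\in D_A$ contributes a clopen component disjoint from $A\times\{0\}$; hence $a^*$ is a cut point. If $p=(d,s)$ with $d\in D_A$ and $s\in(0,1)$, first-coordinate continuity keeps every path starting at a point of $\{d\}\times[0,s)$ on the $d$-ray as long as it avoids $a^*$, and the intermediate value theorem applied to the second coordinate then shows such a path can neither cross height $s$ nor tend to $a^*$ without hitting the removed point $(d,s)$; hence $\{d\}\times[0,s)$ is clopen in $\tilde{A}\setminus\{p\}$ and $p$ is a cut point.

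The main obstacle is the remaining two cases, in which $p$ is a non-cut point and the failing property must be the absence of small path-connected neighborhoods. For $p=((a,0),0)$ with $a\in A$ I will fix $V=B(p,\epsilon)$ with $\epsilon<2=d_F(p,a^*)$, so that $a^*\notin V$; the condition $\overline{D_A}-D_A=A\times\{0\}$ produces $d\in D_A$ arbitrarily close to $(a,0)$, so every open $U\subseteq V$ containing $p$ also contains some $(d,0)$. But $(a,0)$ and $d$ lie in different components of $I(A,A)$, so any path in $U$ from $p$ to $(d,0)$ would have to pass through $a^*\notin U$, a contradiction. The case $p=(y,s)$ with $y\in A\times\{0\}$ and $s\in(0,1)$ is handled in the same way with $\epsilon<2(1-s)=d_F(p,a^*)$, using nearby witnesses $(d,s)$ with $d\in D_A$ close to $y$; the same cross-component obstruction forbids a path inside $U$, and the proof is complete.
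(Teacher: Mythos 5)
Your proof is correct and follows essentially the same case analysis as the paper's: isolation of the points of $D_A$ in $I(A,A)$ yields the small ray neighborhoods $\{d\}\times[0,\epsilon)$, the points $(d,s)$ with $s\in(0,1)$ and the apex are cut points, and the points lying over $A\times\{0\}$ fail the neighborhood condition because of nearby witnesses from $D_A$. If anything, your handling of the last case is slightly more careful than the paper's: you show that \emph{every} open $U\subseteq V$ containing $p$ fails to be path-connected, using the observation that a path joining points over different components of $I(A,A)$ must pass through $a^*$, whereas the paper only remarks that the basic open sets $M\times[0,\epsilon)$ are disconnected.
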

\begin{proof}
Note that all the points in $D_{A}\times\{0\}$ are non-cut points. In fact, if ${(x,0)}\in D_{A}\times\{0\}$, then $\tilde{A}-\{(x,0)\}$ is still path-connected, since all points in $\tilde{A}-\{(x,0)\}$ are path-connected to ${a}^{\ast}$. To show the second part of the property for $p=(x,0)\in D_A\times\{0\}$, fix an arbitrary open neighborhood of $(x,0)$, say $V$. Since $x\in D_{A}$ is an isolated point in the space $\overline{D_{A}}=I(A,A)$, there exists some $\epsilon>0$ so that the open set $U:=\{x\}\times [0,\epsilon)\subseteq V$. $U$ is clearly path-connected.

All the points in $\{({x},r): {x}\in D_{A}, r\in (0,1]\}$ are cut-points, so they do not satisfy the displayed property. 

Finally, for the rest of points $({x},r)\in \tilde{A}$, where ${x}\in A\times\{0\}, r\in[0,1]$, there is a sequence of points $\{x_{i}\}_{i\in\mathbb{N}}$ from $D_{A}$ converging to $x$. Then, for every open neighborhood $M$ of $x$ and $\epsilon\in (0,1)$, the basic open set $V:=M\times[0,\epsilon)$ is not connected, as $V$ contains infinitely many disjoint components $\{x_{i}\}\times [0,\epsilon)$ for some $i\in\mathbb{N}$.
\end{proof}

We are now ready to prove Theorem~\ref{HC}. Suppose $A,B$ are non-empty closed subsets of $[0,1]^{n}$, and $\tilde{A},\tilde{B}$ are constructed as above, with $a^*$ and $b^*$ as their respective apexes. Moreover, assume that $\tilde{f}:\tilde{A}\rightarrow \tilde{B}$ is a homeomorphism. By Lemma \ref{3L1}, we have
\[
\tilde{f}(D_{A}\times\{0\})=D_{B}\times\{0\},
\]
hence $\tilde{f}(A\times\{0\}^{2})=B\times\{0\}^{2}$. Therefore, $A,B$ are homeomorphic to each other.

On the other hand, suppose $f:A\rightarrow B$ is a homeomorphism. With the same argument as in the proof of Proposition \ref{thm:Z}, we can extend $f$ into a homeomorphism $f':\overline{D_{A}}\rightarrow \overline{D_{B}}$ such that $f'\restriction_{A\times\{0\}}=f.$ Then  we can extend $f'$ further to $\tilde{f}$ by sending ${a}^{\ast}$ to ${b}^{\ast}$, and $({x},\lambda)\in \overline{D_{A}}\times [0,1)$ to $(f'({x}),\lambda)\in \overline{D_{B}}\times [0,1)$. $\tilde{f}: \tilde{A}\rightarrow \tilde{B}$ is clearly one-to-one, onto and continuous. Since both $\tilde{A}$ and $\tilde{B}$ are compact metric spaces, the continuity of $\tilde{f}$ implies homeomorphism.

Thus we have shown that $A, B$ are homeomorphic iff $\tilde{A}, \tilde{B}$ are homeomorphic. It is straightforward to verify that $A\mapsto \tilde{A}$ as a map from $\mathcal{F}([0,1]^n)$ to $\mathcal{C}([0,1]^{n+2})$ is Borel. Thus $A\mapsto \tilde{A}$ witnesses that $\mathsf{H}_n\leq_B \mathsf{C}_{n+2}$.

\subsection{Comparing $\mathsf{C}_n$ and $\mathsf{R}_{n}$} 
In this subsection we prove $\mathsf{R}_n\leq_B \mathsf{C}_{n+2}$ for all $n$. 
Since $[0,1]^{n}$ is a continua without cut-points for all $n\geq 2$, a direct application of Proposition \ref{thm:J} gives that for all $n\geq 2$ and closed subsets $A,B\subseteq[0,1]^{n}$, we have
$$\begin{array}{rcl}
(A, B)\in \mathsf{R}_n &\iff & ([0,1]^n, A)\cong ([0,1]^n, B)\\ 
&\iff& J([0,1]^{n}, A), J([0,1]^{n},B) \mbox{ are homeomorphic}.
\end{array}
$$
Similar to Lemma~\ref{R:1}, the path-connected spaces $J([0,1]^{n}, A), J([0,1]^{n},B)$ can be embedded as subspaces of $[0,1]^{n+2}$. Therefore, we have $\mathsf{R}_{n}\leq_{B} \mathsf{C}_{n+2}$ for all $n\geq 2$. Now the only case left is when $n=1$, which we address below. 

\begin{theorem}\label{RC} $\mathsf{R}_{1}\leq_{B} \mathsf{C}_{3}$.
\end{theorem}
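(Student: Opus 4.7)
The strategy is to mirror the argument used above for $n \geq 2$: reduce $\mathsf{R}_1$ to an equivalence of the form $(Y, A) \cong (Y, B)$ for a planar continuum $Y$ without cut-points that contains the ``ambient space'' $[0,1]$ as a topologically distinguishable arc, and then apply Proposition~\ref{thm:J}. The obstruction to using $Y = [0,1]$ directly is that every point of $[0,1]$ except the two endpoints is a cut-point, so Proposition~\ref{thm:J} does not apply. The plan is therefore to replace $[0,1]$ with a cut-point-free planar continuum $Y \subseteq [0,1]^2$ containing a distinguished arc $\gamma \cong [0,1]$.

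Concretely, take $Y := D \cup \gamma$, where $D \subseteq [0,1]^2$ is a closed round disk with nonempty interior and $\gamma \subseteq [0,1]^2$ is an arc disjoint from $\inte(D)$ whose two endpoints $p_0, p_1$ lie on $\partial D$. I would verify two routine facts. First, $Y$ is a continuum with no cut-points: removing any single point of $Y$ leaves the rest path-connected, since $D$ is robust to deleting one of its points, $\gamma$ minus an interior point still has both halves attached to $D$, and removing either endpoint $p_0$ or $p_1$ leaves $\gamma$ attached to $D$ via the other. Second, the points of $Y$ split into four topologically distinguishable classes by local structure: interior of $D$ (open-disk neighborhoods), boundary-non-attachment points of $D$ (half-plane neighborhoods), the attachment points $p_0, p_1$ (half-plane with an attached ray), and interior of $\gamma$ (open-arc neighborhoods). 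This makes $\gamma$ setwise invariant under $\auto(Y)$, and every self-homeomorphism of $\gamma$ lifts to $Y$: extend by the identity on $D$ if it fixes $p_0, p_1$, and by a reflection of $D$ swapping $p_0, p_1$ otherwise. Fixing a homeomorphism $\phi: [0,1] \to \gamma$, this yields $(A, B) \in \mathsf{R}_1 \iff (Y, \phi(A)) \cong (Y, \phi(B))$, and Proposition~\ref{thm:J} then gives $(A, B) \in \mathsf{R}_1 \iff J(Y, \phi(A)) \cong J(Y, \phi(B))$.

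It remains to embed $J(Y, \phi(A))$ in $[0,1]^3$ in a Borel way. Since $Y \subsetneq [0,1]^2$ is a proper closed subset, I would Borel-choose a discrete set $D_A \subseteq [0,1]^2 \setminus Y$ with $\overline{D_A} \setminus D_A = \phi(A)$, realising $I(Y, \phi(A)) = Y \cup D_A$ inside $[0,1]^2$. Placing this copy at level zero in $[0,1]^2 \times \{0\} \subseteq [0,1]^3$, choosing an apex $a^* = (1/2, 1/2, 1)$, and joining every point of $\overline{D_A} = D_A \cup \phi(A)$ to $a^*$ by a straight segment gives an embedding of $J(Y, \phi(A))$ in $[0,1]^3$, by the same argument as Lemma~\ref{R:1} (the segments meet only at $a^*$, because all floor points lie in the hyperplane $\{x_3 = 0\}$ while $a^*$ does not). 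The main obstacle I anticipate is the rigidity analysis in the second paragraph: one must rule out homeomorphisms of $Y$ that send parts of $\gamma$ into $D$ or that reroute $\gamma$ through the disk. The local-dimension distinction between the one-dimensional arc $\gamma$ and the two-dimensional disk $D$ handles this cleanly once made precise, but it is the only step of the proof that requires a genuinely new topological argument beyond what is already in Theorem~\ref{HC} and the $n \geq 2$ case.
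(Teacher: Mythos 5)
Your proposal is correct, but it takes a genuinely different route from the paper. The paper does \emph{not} rigidify the ambient space; it works directly with $J([0,1],A)$, to which Proposition~\ref{thm:J} does not apply, and substitutes a hands-on structural analysis: the apex is recovered as the unique cut-point whose removal leaves infinitely many path-components, the path-components of $J([0,1],A)-\{a^*\}$ are sorted into two homeomorphism types (the half-open segments $\{x\}\times[0,1)$ over $D_A$ versus the one component containing $[0,1]\times\{0\}^2$), and from this $D_A\times\{0\}$, $A\times\{0\}^2$, and $[0,1]\times\{0\}^2$ are shown to be invariant; the degenerate cases $A=\{0\}$ or $A=\{1\}$ have to be set aside separately. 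Your approach instead replaces $[0,1]$ by the cut-point-free planar continuum $Y=D\cup\gamma$ in which $\gamma\cong[0,1]$ sits rigidly, so that $(A,B)\in\mathsf{R}_1$ iff $(Y,\phi(A))\cong(Y,\phi(B))$, and then invokes Proposition~\ref{thm:J} as a black box. What this buys is modularity and uniformity: the only new topology is the rigidity lemma (that $\gamma$ is $\auto(Y)$-invariant, which your local-dimension argument does establish since no point of $D$ has a neighborhood in $Y$ of dimension one, while every point of $\gamma\setminus\{p_0,p_1\}=Y\setminus D$ does, and that every self-homeomorphism of $\gamma$ extends over $D$), there is no case split on $A$, and the same rigidification would work for other ambient spaces with cut-points. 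The one step you should spell out more carefully is the planar placement of the isolated points: the fact quoted in the paper, that $I(X,A;D)$ is independent of $D$, is stated only for $D\subseteq X\times(0,1]$, whereas you place $D_A$ in $[0,1]^2\setminus Y$; you need the (true, and provable by the same back-and-forth) generalization that any compact space of the form $Y\sqcup D'$ with $D'$ discrete and $\overline{D'}\setminus D'=\phi(A)$ is homeomorphic to $I(Y,\phi(A))$ by a homeomorphism matching the non-isolated parts. This off-standard placement is not cosmetic --- it is exactly what lets the fan construction stay inside $[0,1]^3$ rather than $[0,1]^4$ --- so it deserves an explicit argument.
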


The rest of this subsection is devoted to a proof of Theorem~\ref{RC}. 
We show again that for non-empty closed subsets $A,B\subseteq [0,1]$, $A, B$ are homeomorphic iff $J([0,1],A),J([0,1],B)$ are homeomorphic. 
The proof of the forward implication is identical to the proof of Proposition \ref{thm:J} (and is straightforward and easy anyway). We only consider the other direction.

Suppose $\tilde{f}: J([0,1],A)\rightarrow J([0,1],B)$ is a homeomorphism. We verify that
\[
\tilde{f}([0,1]\times\{0\}^{2})=[0,1]\times\{0\}^{2},
\]
and
\[
\tilde{f}(A\times\{0\}^{2})=B\times\{0\}^{2}.
\]

Let $a^\ast$ and $b^\ast$ be the apexes of $J([0,1],A)$ and $J([0,1],B)$ resepectively. We first identify a unique topological property for $a^\ast$.
\begin{lemma}
In $J([0,1],A)$, $a^{\ast}$ is the unique cut-point such that $J([0,1],A)-\{a^{\ast}\}$ has infinitely many path-components.
\end{lemma}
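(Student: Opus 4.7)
The plan is to identify the path-components of $J([0,1],A)\setminus\{a^*\}$ explicitly, and then to show that for every $p\ne a^*$, the space $J([0,1],A)\setminus\{p\}$ has only finitely many path-components. Recall that $J([0,1],A)=F(I([0,1],A),\overline{D})$, where $I([0,1],A)=([0,1]\times\{0\})\cup D$ with $D\subseteq[0,1]\times(0,1]$ a countable isolated set accumulating exactly on $A\times\{0\}$. I would visualize this as the base arc $[0,1]\times\{0\}$ together with the ``floating'' isolated points of $D$ above it, joined by a ``spoke'' $\{y\}\times[0,1]$ from each $y\in\overline{D}=D\cup(A\times\{0\})$ up to the common apex $a^*$.

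First I would show that the path-components of $J([0,1],A)\setminus\{a^*\}$ are exactly: (i) the main component $M$ consisting of the base arc $[0,1]\times\{0\}$ together with all spokes $\{(a,0)\}\times[0,1)$ for $a\in A$, each attached at its foot $(a,0)$; and (ii) for each $d\in D$, the little component $L_d:=\{d\}\times[0,1)$. The essential point is that, because each $d\in D$ is isolated in $I([0,1],A)$, a short computation with the fan metric $d_F$ shows that every point of $L_d$ has a neighborhood contained in $L_d$, so each $L_d$ is open in $J([0,1],A)\setminus\{a^*\}$; it is also closed there, since its closure in $J([0,1],A)$ adds only $a^*$. Hence each $L_d$ is a clopen, path-connected subset disjoint from the path-connected set $M$, and is thus a path-component. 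Since $A\ne\emptyset$, $D$ is countably infinite, so there are infinitely many path-components, and in particular $a^*$ is a cut-point.

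For uniqueness the key observation is that distinct $L_d$'s are joined only at $a^*$, and each $A$-spoke is joined to the base only at its foot and to the other spokes only at $a^*$. Thus for any $p\ne a^*$, removing $p$ can ``damage'' at most one spoke (splitting it into at most two pieces, one of which still meets $a^*$), while every remaining $L_d$, every remaining $A$-spoke, and the apex $a^*$ stay path-connected through $a^*$ into one common component. I would finish with a short case analysis over the position of $p$---namely $p=(d,s)$ for $d\in D$ and $s\in[0,1)$; $p=(b,0)$ for $b\in[0,1]\setminus A$; $p=(a,0)$ for $a\in A$; or $p=((a,0),s)$ for $a\in A$ and $s\in(0,1)$---showing that in each case $J([0,1],A)\setminus\{p\}$ has at most three path-components, the extras arising only from one broken spoke or from a subarc of $[0,1]$ containing no point of $A$. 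The main technical obstacle is the metric verification that each $L_d$ is clopen in $J([0,1],A)\setminus\{a^*\}$; this uses both the isolatedness of $d$ in $I([0,1],A)$ and the explicit form of $d_F$. Once this is in place, the case analysis is routine.
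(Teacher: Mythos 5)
Your proposal is correct and follows essentially the same route as the paper: exhibiting the sets $\{d\}\times[0,1)$ for $d\in D$ as infinitely many path-components of $J([0,1],A)-\{a^*\}$, and then running a case analysis over the remaining points (spoke points over $D$, base-arc points, feet and interiors of $A$-spokes) to show each yields at most finitely many path-components. Your extra care in verifying that each $L_d$ is clopen is a detail the paper leaves implicit, but it is the same argument.
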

\begin{proof}
It is easy to see that $a^{\ast}$ is a cut-point such that $J([0,1],A)-\{a^{\ast}\}$ has infinitely many path-components. In fact, for each $x\in D_A$, $\{x\}\times [0,1)$ is a path-component in $J([0,1],A)-\{a^{\ast}\}$. To see that other points do not satisfy this topological property, we consider them case by case:
\begin{itemize}
\item For all $({x},\lambda)\in J([0,1],A)$, where ${x}\in D_{A}$ and $\lambda\in (0,1)$, $J([0,1],A)-\{({x},\lambda)\}$ has exactly two path-components. 
\item For all $({x},0) \in J([0,1],A)$, where ${x}\in D_{A}$, we have that $(x, 0)$ is a non-cut point.
\item For all $(a,0,0)\in J([0,1],A)$, where $a\geq \max\{A\}$ or $a\leq \min\{A\}$, $J([0,1],A)-\{(a,0,0)\}$ has at most three path-components.
\item For all $(a,0,\lambda)\in J([0,1],A)$, where $\min A< a<\max A$ and $\lambda<1$, we have that $(a,0,\lambda)$ is a non-cut point.
\end{itemize}
\end{proof}

A similar argument show that $b^*$ is the unique cut-point in $J([0,1],B)$ such that $J([0,1],B)-\{b^*\}$ has infinitely many path-components. Thus $\tilde{f}$ sends $a^{\ast}$ to $b^{\ast}$. If we remove $a^{\ast},b^{\ast}$ from their respective spaces, then $\tilde{f}$ sends each path-component in the domain to some path-component in the codomain. 

\begin{lemma}
Assume $A\neq\{0\}$ and $A\neq \{1\}$. Then in the space $J([0,1],A)-\{a^{\ast}\}$, there are two non-homeomorphic types of path-components:
\begin{enumerate}
\item[\rm (i)] $\{x\}\times [0,1)$, where $x\in D_A$;
\item[\rm (ii)] $({[0,1]}\times\{0\}^2)\cup (A\times\{0\}\times [0,1))$.
\end{enumerate}
\end{lemma}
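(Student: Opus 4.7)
The plan is first to identify the sets in (i) and (ii) as the path-components of $J([0,1],A)-\{a^{\ast}\}$, and then to verify that the two types are topologically distinct.

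For the path-component decomposition, let $y\in D_A$. Since $y$ is an isolated point of $I([0,1],A)$, a standard computation with the fan metric on $F_{I([0,1],A)}$ shows that the spoke $\{y\}\times[0,1)$ is clopen in $J([0,1],A)-\{a^{\ast}\}$; being path-connected via the parameter $t$, it is a single path-component, namely an instance of type (i). The complement is exactly the set in (ii), and it is path-connected because the base $[0,1]\times\{0\}^2$ is an interval and every spoke over $a\in A$ attaches to the base at $(a,0,0)$ via its canonical path.

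Each path-component of type (i) is homeomorphic to the half-open interval $[0,1)$. It remains to show that type (ii) is not homeomorphic to $[0,1)$, and I split on whether $A$ meets the open interval $(0,1)$.

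\emph{Case 1: $A\cap(0,1)\neq\emptyset$.} Pick $a\in A\cap(0,1)$ and remove $(a,0,0)$ from type (ii). The remainder decomposes into three pairwise-separated, non-empty pieces: $[0,a)\times\{0\}^2$ together with its attached spokes over $A\cap[0,a)$; the symmetric piece over $(a,1]$; and the open spoke $\{a\}\times\{0\}\times(0,1)$. Hence some point of type (ii) has at least three components upon removal, whereas no point of $[0,1)$ does.

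\emph{Case 2: $A\cap(0,1)=\emptyset$.} Since $A$ is non-empty and $A\neq\{0\},\{1\}$, we must have $A=\{0,1\}$. Then type (ii) is the base $[0,1]\times\{0\}^2$ with one spoke at each endpoint, and concatenating the reversed spoke at $0$, the base, and the forward spoke at $1$ yields a homeomorphism between type (ii) and the open interval $(0,1)$. Since $[0,1)$ has a non-cut point (namely $0$) while $(0,1)$ has none, the two are not homeomorphic, and thus type (ii) $\not\cong$ type (i).

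The main obstacle is Case 2, where the branching used in Case 1 is absent and one must instead distinguish a half-open from an open interval through cut-point counts. This is precisely why the hypothesis $A\neq\{0\}$ and $A\neq\{1\}$ is indispensable: in either excluded case, type (ii) collapses to a single copy of $[0,1)$ and the dichotomy fails.
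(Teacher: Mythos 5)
Your proof is correct and follows essentially the same route as the paper: both arguments distinguish the two types by noting that a type (i) component is a half-open arc, and that the type (ii) component either has a point whose removal yields at least three path-components (when $A\cap(0,1)\neq\emptyset$) or has no non-cut points at all (when $A=\{0,1\}$). The only addition is your explicit verification that the spokes over $D_A$ are clopen, hence genuinely separate path-components, which the paper takes for granted.
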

\begin{proof}
For each ${x}\in D_{A}$, $\Lambda_x=\{x\}\times [0,1)$ is a path-component of $J([0,1],A)-\{a^*\}$. Each of these components satisfies both of the following topological properties:
\begin{itemize}
\item There is a unique non-cut point in $\Lambda_x$, namely $({x},0)$;
\item For every cut-point $p\in \Lambda_x$, $\Lambda_x-\{p\}$ has exactly two path-components.
\end{itemize}
Now  $\Delta=({[0,1]}\times\{0\}^2)\cup (A\times\{0\}\times [0,1))$ is also a path-component. If $A$ contains an element $a\in (0,1)$, then $(a, 0, 0)$ is a cut-point of $\Delta$ so that $\Delta-\{(a,0,0)\}$ has at least three path-components. If $A$ does not contain any element in $(0,1)$, then $A=\{0,1\}$ by our assumptions that $A$ is non-empty and yet $A\neq \{0\}$ and $A\neq\{1\}$. In this case, $\Delta$ has no non-cut points.
\end{proof}

%In order to differentiate these two types of components, we will state the following property in the space $J([0,1],A)-\{a^{\ast}\}$:
%\begin{equation}
%\label{P:2}
%\text{\parbox[c][4em][c]{0.7\textwidth}{$P_{x}$: For all open neighborhood $V$ of $x$, $V$ is disconnected as a subspace of %$J([0,1],A)-\{a^{\ast}\}$.}}
%\end{equation}

%\begin{claim}
%None of the point in the first type of path-components satisfies Property (\ref{P:2}). On the other hand, all the points in $A\times\{0\}\times [0,1)$ satisfy Property (\ref{P:2}).
%\end{claim}
%\begin{proof}
%The first sentence is obvious, since the path-component itself is a path-connected clopen set. For all $(a,0,\lambda)\in A\times\{0\}\times [0,1)$, there is a sequence $\{({a}_{n},\lambda): {a}_{n}\in D_{A}\}_{n}$ converging to $(a,0,\lambda)$. Suppose $V$ is an open neighborhood of $(a,0,\lambda)$, then $V$ must contain some $({a}_{n},\lambda)$, thus it is path-disconnected.
%\end{proof}

For the rest of the proof, we assume without loss of generality that $A, B\neq\{0\},\{1\}$. Then $\tilde{f}$ sends each path-component $\{x\}\times [0,1), {x}\in D_A$ to some $\{y\}\times [0,1), {y}\in D_B$, and sends the path-component $({[0,1]}\times\{0\}^2)\cup (A\times\{0\}\times [0,1))$ to $({[0,1]}\times\{0\}^2)\cup (B\times\{0\}\times [0,1))$. Since $({x},0)$ is the unique non-cut point in  $\{x\}\times [0,1)$ for all ${x}\in D_{A}$, and similarly, $({y},0)$ is the unique non-cut point in $\{y\}\times [0,1)$ for all ${y}\in D_{B}$, we have
\[
\tilde{f}(D_{A}\times\{0\})=D_{B}\times\{0\}.
\]
Hence, we also have $\tilde{f}(\overline{D_{A}}\times\{0\})=\overline{D_{B}}\times\{0\}$, which implies that $\tilde{f}(A\times\{0\}^{2})=B\times\{0\}^{2}$.

We still need to show that $\tilde{f}([0,1]\times\{0\}^{2})=[0,1]\times\{0\}^{2}$. Consider the spaces $J([0,1], A)-(\overline{D_{A}}\times\{0\})$ and $J([0,1], B)-(\overline{D_{B}}\times\{0\})$. $\tilde{f}$ must send the component containing $a^{\ast}$ to the component containing $b^{\ast}$, i.e.
\[
\tilde{f}(\overline{D_{A}}\times (0,1])= \overline{D_{B}}\times (0,1].
\]
Thus, we have shown $\tilde{f}([0,1]\times\{0\}^{2})=[0,1]\times\{0\}^{2}$.

\subsection{Comparing $\mathsf{R}_n$ and $\mathsf{R}_{n+1}$}

In this subsection we compare the complexities among $\mathsf{R}_{n}$ for $n\geq 1$. We will use the well-known fact that for all $n\geq 1$,  if $f:[0,1]^n\to [0,1]^n$ is a homemorphism and $B=\partial [0,1]^n$ is the set of all boundary points, then $f[B]=B$.

\begin{theorem} $\mathsf{R}_n\leq_B \mathsf{R}_{n+1}$ for all $n\geq 1$.
\end{theorem}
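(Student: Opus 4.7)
The plan is to use the Borel map $\varphi: \mathcal{F}([0,1]^n) \to \mathcal{F}([0,1]^{n+1})$ defined by $\varphi(A) = A \times \{0\}$, which embeds $A$ into the bottom face $F_0 = [0,1]^n \times \{0\}$ of $[0,1]^{n+1}$. Borel measurability is immediate: for open $U \subseteq [0,1]^{n+1}$, the condition $\varphi(A) \cap U \neq \emptyset$ becomes $A \cap U_0 \neq \emptyset$ where $U_0 = \{x : (x,0) \in U\}$ is open. The forward direction is also immediate: any $h: [0,1]^n \to [0,1]^n$ with $h[A] = B$ lifts to $h \times \mathrm{id}_{[0,1]}$, which witnesses $(\varphi(A), \varphi(B)) \in \mathsf{R}_{n+1}$.

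The substantive content is the backward direction. Given $f: [0,1]^{n+1} \to [0,1]^{n+1}$ with $f[A \times \{0\}] = B \times \{0\}$, I need $h: [0,1]^n \to [0,1]^n$ with $h[A] = B$. The key structural fact strengthening the stated hint is that $f$ preserves the face stratification of $[0,1]^{n+1}$: for each $k$, the interior of the closed $k$-faces is exactly the set of points whose local model is $\mathbb{R}^k \times [0,\infty)^{n+1-k}$, a topological invariant, so $f$ permutes the closed $k$-faces. Setting $F'' = f^{-1}[F_0]$, we have $A \times \{0\} \subseteq F_0 \cap F''$, and there are three cases. If $F'' = F_0$, then $f|_{F_0}$, transported via the identification $(x,0) \leftrightarrow x$, directly produces $h$. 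If $F''$ is opposite to $F_0$, then $F_0 \cap F'' = \emptyset$ and $A = B = \emptyset$. If $F''$ is adjacent to $F_0$, then $F_0 \cap F''$ is an $(n-1)$-face of $[0,1]^{n+1}$ corresponding, under $F_0 \cong [0,1]^n$, to an $(n-1)$-face $E_A \supseteq A$ of $[0,1]^n$; symmetrically, $f$ sends $F_0 \cap F''$ onto $F_0 \cap f[F_0]$, yielding an $(n-1)$-face $E_B \supseteq B$, and the restriction pulls back to a homeomorphism $\phi: E_A \to E_B$ with $\phi[A] = B$.

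The final step, which I expect to be the main bookkeeping obstacle, is an extension lemma: any homeomorphism $\phi$ between two $(n-1)$-faces of $[0,1]^n$ extends to a self-homeomorphism of $[0,1]^n$. I will prove it by fixing a rigid map $\rho: [0,1]^n \to [0,1]^n$ (a composition of coordinate permutations and sign flips) with $\rho[E_A] = E_B$, so that $\rho^{-1} \circ \phi$ is a self-homeomorphism of $E_A$; this extends to $[0,1]^n$ via the product decomposition $[0,1]^n \cong E_A \times [0,1]$ (with $E_A$ at the $0$-end) by $(\xi, t) \mapsto (\rho^{-1}\phi(\xi), t)$. Composing with $\rho$ yields $h$ with $h|_{E_A} = \phi$, hence $h[A] = B$, completing the reduction.
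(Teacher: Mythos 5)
The backward direction of your reduction fails, because the ``key structural fact'' it rests on is false: a self-homeomorphism of $[0,1]^{n+1}$ does \emph{not} permute the closed $k$-faces for $k\leq n$. The only topologically invariant pieces are the interior and the full boundary $\partial[0,1]^{n+1}$; the face stratification is not intrinsic, since $[0,1]^{n+1}$ is homeomorphic to the ball $D^{n+1}$ and its homeomorphism group acts transitively on boundary points (indeed much more transitively than that). Your proposed local invariant does not detect the stratum: $\mathbb{R}^k\times[0,\infty)^{n+1-k}$ is homeomorphic to $\mathbb{R}^{n}\times[0,\infty)$ for every $k\leq n$ (already the quarter-plane $[0,\infty)^2$ is homeomorphic to the half-plane, e.g.\ via $z\mapsto z^2$), so corner points, edge points, and face-interior points of the boundary are locally indistinguishable. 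Consequently $A\mapsto A\times\{0\}$ is not a reduction. Concrete counterexamples: for $n=1$, take $A=\{0,1\}$ and $B=\{1/3,2/3\}$; no homeomorphism of $[0,1]$ sends $A$ to $B$ (the endpoint pair is preserved setwise), yet both $A\times\{0\}$ and $B\times\{0\}$ are pairs of boundary points of the square $\cong D^2$, so some autohomeomorphism of $[0,1]^2$ matches them. For $n=2$, take $A=\partial[0,1]^2$ and $B$ a round circle in the interior of $[0,1]^2$: these are not $\mathsf{R}_2$-equivalent (the boundary is preserved), but $A\times\{0\}$ and $B\times\{0\}$ are two tame simple closed curves in $\partial[0,1]^3\cong S^2$, hence carried to one another by a homeomorphism of $S^2$ that cones off to all of $D^3\cong[0,1]^3$.

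The paper's construction is designed precisely to dodge this. Instead of $A\times\{0\}$ it uses $\widehat{A}=[\frac13,\frac23]^n\times\{0\}\cup\frac13(A+\vec{1})\times[0,\frac13]$: a shrunken copy of the \emph{whole} cube $[0,1]^n$ sitting in the relative interior of the bottom face, together with a cylinder erected over the shrunken copy of $A$ into the interior of $[0,1]^{n+1}$. Then only the genuinely invariant dichotomy boundary/interior is needed: $\widehat{A}\cap\partial[0,1]^{n+1}=[\frac13,\frac23]^n\times\{0\}$ recovers the cube copy (giving the induced homeomorphism $f$ of $[0,1]^n$), and $\widehat{A}$ minus the boundary recovers $\frac13(A+\vec{1})\times(0,\frac13]$, whose closure pins down the copy of $A$ and forces $f[A]=B$. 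Your final extension lemma (extending a face homeomorphism to the cube) is fine and is analogous to the extension step the paper performs in its forward direction, but it cannot rescue the broken step above. If you want to keep a product-type construction, you must attach enough extra structure to $A\times\{0\}$ that the copies of $[0,1]^n$ and of $A$ become recoverable from the invariant boundary/interior decomposition, which is exactly what $\widehat{A}$ does.
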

\begin{proof}
For a closed $A\subseteq [0,1]^n$, we define $\widehat{A}\subseteq [0,1]^{n+1}$ by first embedding a rescaled copy of $[0,1]^{n}$ on the boundary of $[0,1]^{n+1}$ and then forming a cylinder set off the rescaled copy of $A$:
\[
\widehat{A}:=[\frac{1}{3},\frac{2}{3}]^n\times\{0\}\cup \frac{1}{3}(A+\vec{1})\times [0,\frac{1}{3}],
\]
where 
\[
\frac{1}{3}(A+\vec{1})=\{(\frac{1}{3}a_0+\frac{1}{3},\dots,\frac{1}{3}a_{n-1}+\frac{1}{3}): (a_0,a_1,\dots,a_{n-1})\in A\}.
\]

We verify that $(A, B)\in \mathsf{R}_n$ iff $(\widehat{A}, \widehat{B})\in \mathsf{R}_{n+1}$.
First assume $\widehat{f}:[0,1]^{n+1}\to [0,1]^{n+1}$ is a homeomorphism such that $\widehat{f}[\widehat{A}]=\widehat{B}$. Since $\widehat{f}$ maps the boundary of $[0,1]^{n+1}$ onto itself, and note that $\widehat{A}\cap \partial[0,1]^{n+1}=[\frac{1}{3},\frac{2}{3}]^n\times\{0\}$, $\widehat{f}$ maps $[\frac{1}{3}, \frac{2}{3}]^n\times\{0\}$ onto itself. Thus $\widehat{f}$ induces a homeomorphism $f:[0,1]^n\to[0,1]^n$. More specifically, for any $x\in [0,1]^{n}$, $f(x)=\widehat{f}(\frac{1}{3}(x+\vec{1}),0)$. Meanwhile, we have 
\[
\widehat{f}\displaystyle\left[\frac{1}{3}(A+\vec{1})\times (0,\frac{1}{3}]\right]=\frac{1}{3}(B+\vec{1})\times (0,\frac{1}{3}]
\]
as these are the interior points of $[0,1]^{n+1}$ in $\widehat{A}$ and $\widehat{B}$, respectively. By taking closures, we get, 
\[
\widehat{f}\displaystyle\left[\frac{1}{3}(A+\vec{1})\times\{0\}\right]=\frac{1}{3}(B+\vec{1})\times\{0\}.
\]
Therefore, $f[A]=B$.

Conversely, let $f:[0,1]^n\to [0,1]^n$ with $f[A]=B$. It is enough to define an autohomeomorphism $f'$ on $[0,1]^n$ such that $f'[[\frac{1}{3},\frac{2}{3}]^{n}]=[\frac{1}{3},\frac{2}{3}]^{n}$ and $f'[\frac{1}{3}(A+\vec{1})]=\frac{1}{3}(B+\vec{1})$. Assuming such an $f'$ is defined, then let $\widehat{f}(x, t):=(f'(x),t)$ for all $x\in [0,1]^n$ and $t\in [0,1]$, and $\widehat{f}$ would be an autohomeomorphism of $[0,1]^{n+1}$ with $\widehat{f}[\widehat{A}]=\widehat{B}$. 

Consider the case when $n=1$, whereas there are two cases depending on the orientation of $f$. If $f$ is order-preserving, then define
\[
f'(x)=\begin{cases}\frac{1}{3}[\pi(3x-1)+1], &\mbox{ if $x\in [\frac{1}{3},\frac{2}{3}]$}, \\ x, &\mbox{ if $x\in [0,\frac{1}{3})\cup (\frac{2}{3},1]$.}\end{cases}
\]
If $\pi$ is order-reversing, then let
\[
f'(x)=\begin{cases}\frac{1}{3}[\pi(3x-1)+1], & \mbox{ if $x\in [\frac{1}{3},\frac{2}{3}]$}, \\ 1-x, &\mbox{ if $x\in [0,\frac{1}{3})\cup (\frac{2}{3},1]$. }\end{cases}
\]

For $n\geq 2$, we define $f'$ in two steps. In the first step, let $\phi(x)=\frac{1}{3}(f(3x-\vec{1})+\vec{1})$. Then $\phi$ is an autohomeomorphism of $[\frac{1}{3},\frac{2}{3}]^n$ with $\phi[\frac{1}{3}(A+\vec{1})]=\frac{1}{3}(B+\vec{1})$. It remains to extend $\phi$ to an autohomeomorphism $f'$ of $[0,1]^n$ such that $f'|_{[\frac{1}{3},\frac{2}{3}]^n}=\phi$. 

By recentering and rescaling, our problem is now topologically equivalent to that of extending a given autohomeomorphism on $$B_{1/3}:=\{(x_{0},\dots,x_{n-1})\in \mathbb{R}^{n}:||(x_{0},\dots,x_{n-1})||\leq \frac{1}{3}\}$$ to an autohomeomorphism on $$B_1:=\{(x_{0},\dots,x_{n-1})\in \mathbb{R}^{n}: ||(x_{0},\dots,x_{n-1})||\leq 1\}.$$ At this point we switch to spherical coordinates. Thus
$$B_1=\{(r, \alpha_1,\dots,\alpha_{n-1}): r\in [0,1], \alpha_1,\dots, \alpha_{n-2}\in [0,\pi], \alpha_{n-1}\in [0, 2\pi)\}.$$ The given autohomeomorphism $\phi$ on $B_{1/3}$ must send boundary points to boundary points, that is, for all $\alpha_{1},\dots,\alpha_{n-1}$,
\[
\phi(\frac{1}{3},\alpha_{1},\dots,\alpha_{n-1})=(\frac{1}{3},\alpha_{1}',\dots,\alpha_{n-1}')
\]
for some $\alpha_{1}',\dots,\alpha_{n-1}'$. Let $\pi$ denote the projection map $\pi(r,\alpha_{1},\dots,\alpha_{n-1})=(\alpha_{1},\dots,\alpha_{n-1})$. Now we can define $f'$ as
\[
f'(r,\alpha_{1},\dots,\alpha_{n-1})=\begin{cases}\phi(r,\alpha_{1},\dots,\alpha_{n-1}), & \mbox{ if $r\leq \frac{1}{3}$},\\ (r,\pi\circ \phi(\frac{1}{3},\alpha_{1},\dots,\alpha_{n-1})), & \mbox{ if $r>\frac{1}{3}$.}\end{cases}
\]
$f'$ is clearly a continuous bijection on $B_1$, and thus a homeomorphism.
\end{proof}

The following diagram summarizes our results in the last two subsections regarding $\mathsf{C}_n$ and $\mathsf{R}_n$:

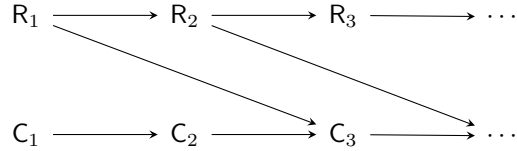
\begin{figure}[h]
\centering
\begin{tikzpicture}
  \matrix (m) [matrix of math nodes,row sep=3em,column sep=4em,minimum width=2em]
  {
     \mathsf{R}_1 & \mathsf{R}_2 & \mathsf{R}_3 & \cdots \\
     \mathsf{C}_1 & \mathsf{C}_2& \mathsf{C}_3& \cdots \\};
  \path[-stealth]
  (m-1-1) edge (m-1-2)
          edge     (m-2-3)
          
  (m-1-2) edge (m-1-3)
            edge (m-2-4)
    (m-1-3) edge (m-1-4)
           
    (m-2-1) 
            edge (m-2-2)
    (m-2-2) 
            edge (m-2-3)
    (m-2-3) 
            edge (m-2-4);
\end{tikzpicture}
\caption{Reductions between $\mathsf{C}_{n}$ and $\mathsf{R}_{n}$.}
\label{Reduction2}
\end{figure}

\section{The Graph Isomorphism and the complexity of $\mathsf{C}_n$, $\mathsf{H}_n$ and $\mathsf{R}_n$}

\subsection{Comparing the graph isomorphism to $\mathsf{H}_1$ and $\mathsf{R}_1$} The graph isomorphism is a benchmark equivalence relation that arises often in the study of classification problems in mathematics, in particular in topology. For example, in \cite{CaG} it was shown that the homeomorphic classification of all zero-dimensional compact metric spaces is Borel bireducible with the graph isomorphism. In fact, the proof shows that the graph isomorphism is in particular reducible to the homeomorphism relation of the closed zero-dimensional subspaces of $[0,1]$. Thus it follows that the graph isomorphism is Borel reducible to $\mathsf{H}_1$. Another example is the result from \cite{CDM} that the graph isomorphism is Borel reducible to the homeomorphism relation of 2-dimensional dendrites. It follows that the graph isomorphism is Borel reducible to $\mathsf{C}_2$.

The following theorem combines results of Friedman and Stanley \cite{FS} and Becker and Kechris \cite{BK}, and further justifies the ubiquity of the graph isomorphism and its status as a benchmark equivalence relation.

\begin{theorem} \label{gi} The following equivalence relations are Borel bireducible with each other:
\begin{enumerate}
\item[\rm (i)] The graph isomorphism, i.e., the isomorphism relation of all countable graphs;
\item[\rm (ii)] The isomorphism relation of all countable linear orderings;
\item[\rm (iii)] The isomorphism relation of all countable $L$-structures, where $L$ is any countable language with at least one $n$-ary relation symbol where $n\geq 2$;
\item[\rm (iv)] A universal equivalence relation for the class of all isomorphism relations of countable $L$-structures, where $L$ varies over all countable languages;
\item[\rm (v)] A universal equivalence relation for the class of all orbit equivalence relations that arise from a Borel action of the infinite permutation group $S_\infty$.
\end{enumerate}
\end{theorem}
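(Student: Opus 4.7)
The plan is to prove all five equivalence relations pairwise Borel bireducible by establishing a cycle of Borel reductions
\[
(\mathrm{i})\leq_B (\mathrm{ii})\leq_B (\mathrm{iii})\leq_B (\mathrm{iv})\leq_B (\mathrm{v})\leq_B (\mathrm{i}),
\]
combining a handful of essentially definitional links with the two deep results attributed in the statement.

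The routine links I would handle first. The reduction (ii)$\leq_B$(iii) is an inclusion: the standard Borel space of countable linear orders is a Borel subset of the space of countable $L$-structures whenever $L$ has a binary relation symbol, and the two isomorphism relations agree on the common domain. The reduction (iii)$\leq_B$(iv) is immediate from the definition of universality. For (iv)$\leq_B$(v), one observes that for any countable language $L$, the isomorphism relation on the Polish space $X_L$ of countable $L$-structures with universe $\mathbb{N}$ is exactly the orbit equivalence relation of the Borel logic action of $S_\infty$ on $X_L$; hence every isomorphism relation of countable structures is itself an instance of a Borel $S_\infty$-orbit equivalence relation, and so the universal relation of (iv) Borel reduces to the universal relation of the larger class (v).

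The two hard steps are (v)$\leq_B$(i) and (i)$\leq_B$(ii). For (v)$\leq_B$(i), I would invoke the Becker--Kechris representation theorem: every Borel action of $S_\infty$ on a Polish space is Borel isomorphic, as an action, to the logic action of $S_\infty$ on $X_L$ for some countable language $L$, so every orbit equivalence relation in (v) is Borel reducible to $L$-structure isomorphism for some $L$. To pass from $L$-structures to graphs I would use a standard coding: encode each $L$-structure $\mathcal{M}$ on $\mathbb{N}$ as a countable graph in which each vertex of $\mathcal{M}$ is marked by a distinctive gadget (for instance, an attached finite tree of a unique size) and in which each tuple $(a_1,\dots,a_n)$ belonging to a relation $R\in L$ is recorded by a small labelled gadget connecting $a_1,\dots,a_n$ and identifying both $R$ and the position within the tuple. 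This map is Borel and preserves isomorphism in both directions, so it composes with the Becker--Kechris reduction to yield (v)$\leq_B$(i).

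For (i)$\leq_B$(ii) I would appeal to Friedman--Stanley. Given a graph $G=(\mathbb{N},E)$, the idea is to construct a countable linear order $L_G$ out of rigid building blocks, one per vertex of $G$, whose internal order type records the vertex's edge set; these blocks are then combined (typically by a shuffle, or by concatenation along a fixed backbone that anchors the vertex blocks) so that the isomorphism type of $L_G$ determines and is determined by the isomorphism type of $G$. The main obstacle, and the most delicate part of the entire theorem, is engineering these rigid blocks so that each block is uniquely recoverable from $L_G$, so that the edge relation between any two vertices can be read off from the pair of corresponding blocks, and so that the assignment $G\mapsto L_G$ is Borel. Once these gadgets are in place, verifying the biconditional for a given pair of graphs is a routine back-and-forth argument, and the cycle closes.
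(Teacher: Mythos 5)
Your outline is correct and matches the paper's treatment: the paper offers no proof of its own, simply attributing the theorem to Friedman--Stanley (for reducing graph isomorphism to isomorphism of countable linear orders) and Becker--Kechris (for reducing Borel $S_\infty$-orbit equivalence relations to isomorphism of countable structures), which are exactly the two nontrivial steps you isolate; the remaining links in your cycle are routine, as you say.
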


For unexplained terminology we refer the reader to \cite{G09}. 

When an equivalence relation or a classification problem is Borel reducible to the graph isomorphism, it means that one can assign a countable graph, a kind of countable structure, as a complete invariant for the equivalence classes. Conversely, if an equivalence relation is classifiable by any kind of countable structures, then by (iv) it can also be classified by countable graphs.

That $\mathsf{H}_1$ and $\mathsf{R}_1$ are Borel bireducible with the graph isomorphism is essentially folklore. For example, in Hjorth \cite{H} the fact that $\mathsf{R}_1$ is Borel reducible to the graph isomorphism is left as an exercise, Exercise 4.13. Here we sketch some proofs for the convenience of the reader. 

\begin{theorem} Both $\mathsf{H}_1$ and $\mathsf{R}_1$ are Borel bireducible with the graph isomorphism.
\end{theorem}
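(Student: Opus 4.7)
My plan is to establish the four reductions that together yield the two bireducibilities. The reduction $\text{graph iso} \leq_B \mathsf{H}_1$ is cited above from Camerlo--Gao, so I would supply sketches only of the other three.

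For $\mathsf{H}_1 \leq_B \text{graph iso}$, to each non-empty closed $A \subseteq [0,1]$ I associate the pair $(K_A, C_A)$, where $K_A := A/{\sim}$ collapses each connected component of $A$ to a point and $C_A \subseteq K_A$ is the countable subset of classes coming from non-degenerate interval components. Since every component of a closed subset of $[0,1]$ is either a singleton or a closed interval, $K_A$ is compact, metrizable, and zero-dimensional. I would verify that $A \cong B$ iff the pairs $(K_A, C_A)$ and $(K_B, C_B)$ are homeomorphic: the forward direction is immediate, while for the reverse, any pair-homeomorphism lifts to a homeomorphism $A \to B$ by mapping each interval component linearly to its counterpart and each point component to its counterpart, with continuity at the skeleton following from the fact that interval components accumulating to a limit point have diameters tending to $0$. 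The Camerlo--Gao Borel reduction of homeomorphism of compact zero-dimensional metric spaces to graph isomorphism extends to such pairs with a distinguished countable subset by adding a color to the relevant vertices of the coding graph.

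For $\mathsf{R}_1 \leq_B \text{graph iso}$, to $A$ I associate the countable structure $\mathcal{L}_A = (L_A, <, P_{\text{int}}, P_0, P_1)$, where $L_A$ consists of the connected components of $[0,1] \setminus A$ (the gaps of $A$) together with the non-degenerate interval components of $A$, linearly ordered by position in $[0,1]$; $P_{\text{int}}$ marks the latter type; and $P_0, P_1$ flag whether $0$ and $1$ belong to $A$. Since any autohomeomorphism of $[0,1]$ is monotone and matches gaps to gaps and interval components to interval components, $A\,\mathsf{R}_1\,B$ holds iff $\mathcal{L}_A$ and $\mathcal{L}_B$ are isomorphic or reverse-isomorphic; conversely any such (anti-)isomorphism extends to an autohomeomorphism of $[0,1]$ by linear maps on each element of $\mathcal{L}$ together with Dedekind-continuity across the skeleton. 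Absorbing the ``or reverse-isomorphic'' disjunction by tupling $\mathcal{L}_A$ with a canonical non-self-dual decoration, Theorem~\ref{gi} then gives the reduction to graph isomorphism.

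For $\text{graph iso} \leq_B \mathsf{R}_1$, by Theorem~\ref{gi} it suffices to Borel-reduce isomorphism of countable linear orderings. Given $(L,<)$ on $\mathbb{N}$, I Borel-select an order-preserving embedding $\sigma \colon L \hookrightarrow \mathbb{Q} \cap (\tfrac{1}{4}, \tfrac{3}{4})$ (by back-and-forth into $\mathbb{Q}$) and rapidly decreasing widths $w_n$ so that the intervals $I_n = (\sigma(n) - w_n, \sigma(n) + w_n)$ are pairwise disjoint subintervals of $(\tfrac{1}{4}, \tfrac{3}{4})$, and set $A_L := \{0\} \cup \bigl([\tfrac{1}{4}, \tfrac{3}{4}] \setminus \bigcup_n I_n\bigr)$, a closed subset of $[0,1]$ containing $0$ but not $1$. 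The rapid decay of $w_n$ ensures that no phantom gaps appear at accumulation points of $\sigma(\mathbb{N})$, so the gap-order of $A_L$ is $1 + L + 1$, from which $L$ is uniquely recovered. The asymmetric endpoint status forces any $\mathsf{R}_1$-equivalence $h$ between $A_L$ and $A_{L'}$ to be orientation-preserving (otherwise $h(0) = 1 \in A_{L'}$, contradicting $1 \notin A_{L'}$), whence $h$ induces $L \cong L'$. The main technical point is the converse: extending a given isomorphism $L \cong L'$ to an autohomeomorphism of $[0,1]$ taking $A_L$ onto $A_{L'}$, which I would do by mapping each $I_n$ linearly onto $I'_{\phi(n)}$ and extending to the skeleton by Dedekind-continuity.
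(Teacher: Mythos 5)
Your four-reduction architecture is the same as the paper's (a components-quotient invariant for $\mathsf{H}_1\leq_B$ graph iso, a gap-order invariant for $\mathsf{R}_1\leq_B$ graph iso, and countable linear orders realized as gap structures for graph iso $\leq_B\mathsf{R}_1$), but there is a genuine gap in the first of these: the biconditional ``$A\cong B$ iff $(K_A,C_A)\cong(K_B,C_B)$'' is false. Take $A$ to be (an affine copy in $[0,1]$ of) $\{-1-\tfrac1n:n\geq 1\}\cup[-1,1]\cup\{1+\tfrac1n:n\geq 1\}$ and $B$ to be $\{-1-\tfrac1n:n\geq 1\}\cup[-1,1]\cup\{2\}$. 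In both cases the component quotient is a single convergent sequence together with its limit (i.e., $\omega+1$), with the limit point being the unique marked class, so $(K_A,C_A)\cong(K_B,C_B)$; but $A\not\cong B$, since the set of accumulation points of the isolated points has two elements for $A$ (both endpoints of the interval component) and one for $B$. The quotient $K_A$ simply cannot see at which end of an interval component the other components pile up, and in this example no lift of the pair-homeomorphism exists, so the ``continuity at the skeleton'' argument is not just incomplete but unrescuable for this invariant. You need to enrich it, e.g.\ by keeping the two endpoints of each nondegenerate component as distinct points of the quotient and marking them as an unordered pair. (The same subtlety is present in the paper's invariant $\mathfrak{S}_A=(Q_A,P_A,\subseteq)$, which by Stone duality records exactly the data $(K_A,C_A)$; the enrichment is needed there as well.)

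The other three reductions are essentially the paper's, with two remarks. For $\mathsf{R}_1\leq_B$ graph iso your linearly ordered structure of gaps and interval components is the paper's $\mathfrak{T}_A$ with endpoint flags added; the vague ``tupling with a canonical non-self-dual decoration'' for absorbing the reverse-isomorphism disjunction should just be the unordered-pair device (a $\mathbb{Z}_2\ltimes S_\infty^2$-action, as in the paper), since an arbitrary symmetry-breaking decoration does not correctly encode ``isomorphic or anti-isomorphic.'' For graph iso $\leq_B\mathsf{R}_1$, your trick of forcing $0\in A_L$ and $1\notin A_L$ to kill orientation-reversing homeomorphisms is a genuine improvement in precision over the paper, which does not address reversals in this direction. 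However, your construction of $A_L$ is under-specified at exactly the point you identify as the main technical one: ``pairwise disjoint intervals of rapidly decreasing width'' does not guarantee that two isomorphic presentations of the same order yield ambiently homeomorphic sets. Between two gaps that are adjacent in the order, one realization may leave a nondegenerate interval of $A_L$ and the other a single point (or, over a dense suborder, one realization may leave residue with nonempty interior and the other a Cantor set), in which case no autohomeomorphism of $[0,1]$ can match them and the ``extend by Dedekind-continuity'' step fails. The construction must force $A_L\cap[\tfrac14,\tfrac34]$ to have empty interior and make the residue between gaps canonically determined by the order-theoretic relation of the corresponding elements; this is precisely what the paper's middle-thirds placement of the intervals $I_n$ accomplishes.
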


\begin{proof} A Borel reduction from the graph isomorphism to $\mathsf{H}_1$ was given in \cite{CaG}, where it was shown that the graph isomorphism is Borel reducible to the hemeomorphism relation of closed zero-dimensional subsets of $[0,1]$. Here we sketch a proof that $\mathsf{H}_1$ is Borel reducible to the graph isomorphism. In fact, we define a special kind of countable structure and show that $\mathsf{H}_1$ can be classified by these countable structures. Then it follows from Theorem~\ref{gi} that $\mathsf{H}_1$ is Borel reducible to the graph isomorphism.

Given a closed $A\subseteq [0,1]$, we consider its connected components. Note that each connected component of $A$ is either a singleton or a closed interval (of postive length). Since each closed interval contains an open interval, there can be only countably many connected components of $A$ that are intervals. Let $P_A$ be the set of all connected components of $A$ that are closed intervals. Then $P_A$ is a countable set. Let $Q_A$ be the set of all clopen subsets of $A$. Then $Q_A$ is a countable Boolean algebra. Let
$$ \mathfrak{S}_A=(Q_A, P_A, \subseteq) $$
where $\subseteq$ is the relation between an element of $P_A$ and an element of $Q_A$. Then $\mathfrak{S}_A$ is a countable structure encoding $A$. 

More formally, let $L$ be the language
$$ \{ Q, P, \cup, \cap, {\ }^c, \emptyset, I, \subseteq\} $$
where $Q$ and $P$ are unary relation symbols, $\cup, \cap, {\ }^c, \emptyset, I$ are symbols to express that $Q$ is a Boolean algebra, and $\subseteq$ is a relation symbol. In order for the class of $L$-structures to form a standard Borel space, we consider the following axioms in addition to those describing that $Q$ is a Boolean algebra:
\begin{itemize}
\item $\forall x\, (Q(x)\vee P(x))\wedge \neg(Q(x)\wedge P(x))$
\item $\forall x, y\ (x\subseteq y \longrightarrow P(x)\wedge Q(y))$
\end{itemize}

We claim that closed subsets $A, B\subseteq [0,1]$ are homeomorphic iff $\mathfrak{S}_A, \mathfrak{S}_B$ are isomorphic. First, if $A, B$ are homeomorphic, then the homeomorphism gives rise to an isomorphism between $Q_A$ and $Q_B$, which also sends $P_A$ to $P_B$ and preserves the relation $\subseteq$. Thus there is an isomorphism between $\mathfrak{S}_A$ and $\mathfrak{S}_B$. Conversely, suppose there is an isomorphism $\varphi$ between $\mathfrak{S}_A$ and $\mathfrak{S}_B$. Then $\varphi$ gives a bijection between $P_A$ and $P_B$, as well as a bijection between $Q_A$ and $Q_B$. By the Stone duality, the bijection between $Q_A$ and $Q_B$ gives rise to a bijection $\psi$ between the dual space of $Q_A$ and the dual space of $Q_B$. These dual spaces correspond to the connected components of $A$ and $B$ respectively. Now the bijection between $P_A$ and $P_B$, together with the $\subseteq$ relation, ensure that $\psi$ sends each element of $P_A$ to an element of $P_B$. Thus $\psi$ is a homeomorphism between $A$ and $B$.

Next we sketch a proof that $\mathsf{R}_1$ is Borel reducible to the graph isomorphism. We again define a countable structure as a complete invariant. Given a closed subset $A\subseteq[0,1]$, we define a structure
$$ \mathfrak{T}_A=\{ V_A, U_A, <\} $$
where $V_A$ is the set of all maximal open intervals contained in the complement of $A$ in $[0,1]$, $U_A$ is the set of all maximal open intervals contained in $A$, and $<$ compares all intervals in $U_A\cup V_A$ in their natural order. Formally, our language $L'$ consists of unary relation symbols $U$ and $V$ and a binary relation symbol $<$, and the $L'$-structures we consider satisfy the the following axiom in addition to the axioms of linear order for $<$:
\begin{itemize}
\item $\forall x\ (V(x)\vee U(x))\wedge\neg (V(x)\wedge U(x))$
\end{itemize}
We claim that for closed subsets $A, B\subseteq [0,1]$, there is an order-preserving homeomorphism $f:[0,1]\to [0,1]$ with $f[A]=B$ iff $\mathfrak{T}_A, \mathfrak{T}_B$ are isomorphic. First, if there is an order-preserving homeomorphism $f:[0,1]\to [0,1]$ with $f[A]=B$, then $f[V_A]=V_B$, $f[U_A]=U_B$, and $f$ preserves the order $<$ for elements of $V_A\cup U_A$. Thus $f$ induces an isomorphism from $\mathfrak{T}_A$ to $\mathfrak{T}_B$. Conversely, if $\varphi$ is an isomorphism from $\mathfrak{T}_A$ to $\mathfrak{T}_B$, then $\varphi$ induces an order-preserving homeomorphism on $[0,1]$ that sends $\partial A=[0,1]-\bigcup (U_A\cup V_A)$ to $\partial B=[0,1]-\bigcup (U_B\cup V_B)$. Since $\varphi$ also sends $V_A$ and $V_B$, this homeomorphism sends $A$ to $B$.

To deal with the orientation of the homeomorphism we modify the construction of the countable structure as follows. Given a closed subset $A\subseteq [0,1]$, we let $A^*=\{1-x\,:\, x\in A\}$ and
$$ \mathfrak{M}_A=\{ \mathfrak{T}_A, \mathfrak{T}_{A^*}\}. $$
That is, $\mathfrak{M}_A$ is essentially an unordered pair of countable structures that encodes both $A$ and its order-reversing copy $A^*$. It is obvious that for closed $A, B\subseteq [0,1]$, $(A, B)\in \mathsf{R}_1$ iff $\mathfrak{M}_A, \mathfrak{M}_B$ are isomorphic. Formally, we encode an unordered pair $\{ S, T\}$ by $(S, T)$, with a semidirect product $\mathbb{Z}_2\ltimes S_\infty^2$ acting on the space of an ordered pair of structures. Since $\mathbb{Z}_2\ltimes S_\infty^2$ is topologically isomorphic to a closed subgroup of $S_\infty$, it follows from Theorem~\ref{gi} that the orbit equivalence relation is Borel reducible to the graph isomorphism.

Finally we show that the graph isomorphism is Borel reducible to $\mathsf{R}_1$. For this we will actually assign to each countable linear ordering $R$ a zero-dimensional closed subset $A_R\subseteq [0,1]$ as complete invariant. The objective is to define $A_R$ so that $\mathfrak{T}_{A_R}$ from the construction above will be isomorphic to $R$. Then $R\mapsto A_R$ will be a Borel reduction from the isomorphism relation of all linear orderings to $\mathsf{R}_1$, and by Theorem~\ref{gi} this gives a Borel reduction from the graph isomorphism to $\mathsf{R}_1$. Without loss of generality, assume $R$ is infinite. To construct $A_R$, first enumerate the elements of $R$ non-repeatedly as $x_n$ for $n\geq 1$. Inductively define an open interval $I_n=(a_n, b_n)\subseteq [0,1]$ as follows. Let
\[
I_{1}=\begin{cases}(0,\frac{1}{3}), & \mbox{ if $x_{1}$ is the least element,}\\ (\frac{2}{3},1), & \mbox{ if $x_{1}$ is the largest element,}\\ (\frac{1}{3},\frac{2}{3}), & \mbox{ otherwise.}\end{cases}
\]
Assume all $I_i=(a_i,b_i)$ for $i<n$ have been defined. If $x_i$ is the greatest among $\{x_1, \dots, x_{n-1}\}$ with $x_i<x_n$, and $x_j$ is the least among $\{x_1,\dots, x_{n-1}\}$ with $x_n<x_j$, then we let 
$$ a_n=\begin{cases} b_i, & \mbox{ if there is no $x\in R$ with $x_i<x<x_n$,} \\ \frac{2}{3} b_i+\frac{1}{3}a_j, & \mbox{ otherwise,}\end{cases}
$$
and
$$ b_n=\begin{cases} a_j, & \mbox{ if there is no $x\in R$ with $x_n<x<x_j$,} \\ \frac{1}{3} b_i+\frac{2}{3}a_j, & \mbox{ otherwise.}\end{cases}
$$

If $x_i$ does not exist, then we let
$$ a_n=\begin{cases} 0, & \mbox{ if $x_n\in R$ is the least element,} \\ \frac{1}{3}a_j, & \mbox{ otherwise.}\end{cases} $$
\[
b_{n}=\begin{cases}a_{j}, & \mbox{ if there is no $x\in R$ with $x_{n}<x<x_{j}$} \\ \frac{2}{3}a_{j}, & \mbox{ otherwise.}\end{cases}
\]
Similarly, if $x_j$ does not exist, then let
\[
a_{n}=\begin{cases}b_{i}, & \mbox{ if there is no $x\in R$ with $x_{i}<x<x_{n}$,}\\ \frac{2}{3}b_{i}+\frac{1}{3}, & \mbox{ otherwise.}\end{cases}
\]
$$ b_n=\begin{cases} 1, & \mbox{ if $x_n\in R$ is the largest element,} \\ \frac{1}{3}b_i+\frac{2}{3}, & \mbox{ otherwise.}\end{cases}$$
Eventually, let $A_R=[0,1]-\bigcup_{n\geq 1}I_n$. Each interval $I_n$ is a maximal open interval in the complement of $A_R$. Our construction guarantees that $A_R$ has empty interior, and so it is zero-dimensional. 
\end{proof}

\subsection{Reducing turblence into $\mathsf{C}_2$ and $\mathsf{R}_2$} It follows from results in the previous subsections that the graph isomorphism is Borel reducible to all $\mathsf{C}_n$, $\mathsf{H}_n$, and $\mathsf{R}_n$. In this final subsection we show that $\mathsf{C}_n$, $\mathsf{H}_n$, and $\mathsf{R}_n$ are not Borel reducible to the graph isomorphism. This means that these problems are strictly more complex than the graph isomorphism.

In \cite{H}, Hjorth developed a theory of turbulence for exactly this type of question. He defined a notion of turbulent actions and showed that if an action of a Polish group is turbulent, then the orbit equivalence relation is not Borel reducible to the graph isomorphism (or to the isomorphism of countable structures). He gave an example of a homeomorphism problem of compact metric spaces which is not Borel reducible to the graph isomorphism. Unfortunately, his examples are infinite-dimensional. In the following we will adapt Hjorth's construction to create 2-dimensional continua. This will show the following main result.

\begin{theorem}\label{C2not} $\mathsf{C}_2$ is not Borel reducible to the graph isomorphism.
\end{theorem}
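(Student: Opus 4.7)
The plan is to apply Hjorth's theory of turbulence from \cite{H}. Recall the key theorem there: if a Polish group $G$ acts continuously on a Polish space $Y$ by a \emph{turbulent} action, then the induced orbit equivalence relation $E^Y_G$ is not Borel reducible to the isomorphism relation of countable $L$-structures for any countable language $L$. By Theorem \ref{gi}(iv) this is equivalent to not being Borel reducible to the graph isomorphism. Our strategy is therefore to Borel reduce a concrete turbulent orbit equivalence relation (essentially the one that underlies Hjorth's infinite-dimensional homeomorphism example) into $\mathsf{C}_2$; once this is done, a Borel reduction of $\mathsf{C}_2$ to the graph isomorphism would contradict turbulence.

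The construction starts from an appropriate turbulent action, for instance the translation action of the sequence group $(c_0, +)$ on $([0,1]^{\mathbb{N}}, +)$ (modulo $1$), whose orbit equivalence relation $E_{c_0}$ is the standard example of a turbulent equivalence relation. Given $x = (x_n) \in [0,1]^{\mathbb{N}}$, we will build a planar continuum $C_x \subseteq [0,1]^2$ by attaching small tags to a fixed planar ``comb'' dendrite $D_0 \subseteq [0,1]^2$. Concretely, $D_0$ consists of a spine arc with countably many teeth whose endpoints $\{p_n\}$ accumulate at a single distinguished limit point $p_*$. At each $p_n$ we glue a small planar tag $\gamma_n(x_n)$ — for example, a simple closed curve of diameter $\delta_n x_n$ for a rapidly decreasing scaling sequence $\delta_n \to 0$ — so that $C_x = D_0 \cup \bigcup_n \gamma_n(x_n)$ is a $2$-dimensional subcontinuum of $[0,1]^2$. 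The assignment $x \mapsto C_x$ is plainly Borel from $[0,1]^{\mathbb{N}}$ to $\mathcal{C}([0,1]^2)$.

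The substance of the proof is showing that $C_x$ and $C_y$ are homeomorphic if and only if $x$ and $y$ lie in the same $E_{c_0}$-orbit, modulo an automorphism of the index set (absorbed by composing $c_0$ with the natural $S_\infty$ permutation action, which remains turbulent). For the forward direction, one argues that the comb skeleton and the accumulation point $p_*$ are topologically distinguished features of $C_x$, so any homeomorphism $h \colon C_x \to C_y$ must send $D_0$ to $D_0$, $p_*$ to $p_*$, and induce a permutation $\pi$ on indices with $h[\gamma_n(x_n)] = \gamma_{\pi(n)}(y_{\pi(n)})$. Continuity of $h$ at $p_*$ then forces the tag diameters to match asymptotically, which by our choice of the $\delta_n$ translates to $x_n - y_{\pi(n)} \to 0$. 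For the reverse direction, one constructs a homeomorphism tag-by-tag: given $x, y$ with $x_n - y_{\pi(n)} \to 0$, one uses the available slack around each $p_n$ to stretch or contract a planar neighborhood and continuously deform $\gamma_n(x_n)$ into $\gamma_{\pi(n)}(y_{\pi(n)})$, with the scaling factors $\delta_n \to 0$ ensuring that the overall map remains continuous at $p_*$.

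The main obstacle will be the forward direction, and in particular ruling out unwanted rigidity coming from the planar embedding: orientations of Jordan curves and local separation properties of $\mathbb{R}^2$ around each $p_n$ could conceivably create topological invariants that are \emph{finer} than what the tag parameter records, thereby shrinking orbits and destroying the turbulent structure. The safe design is to choose tags with large topological self-symmetry groups — circles or simple arcs rather than rigid figures — and to arrange them so that no planar isotopy links the parameters nontrivially across different indices. Once these geometric details are handled and the orbit characterization is established, Hjorth's turbulence theorem immediately yields that $\mathsf{C}_2$ is not Borel reducible to the graph isomorphism, completing the proof of Theorem \ref{C2not}.
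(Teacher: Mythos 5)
Your overall strategy---reduce a concrete turbulent orbit equivalence relation into $\mathsf{C}_2$ and invoke Hjorth's theorem---is exactly the paper's strategy, and your idea of recovering the asymptotic condition from continuity of the homeomorphism at a distinguished accumulation point is also the right mechanism. But the concrete encoding is fatally flawed: you encode the real parameter $x_n$ as the \emph{diameter} $\delta_n x_n$ of a tag, and diameter is not a topological invariant. All circles (or arcs) of positive diameter are homeomorphic, so for any $x,y$ with all coordinates positive there is a homeomorphism $C_x\to C_y$ that fixes the comb pointwise and maps $\gamma_n(x_n)$ onto $\gamma_n(y_n)$ tag by tag; since $\delta_n\to 0$ forces both families of tags to shrink to $p_*$, this map is continuous at $p_*$ regardless of the values $x_n,y_n$. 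Hence $C_x\cong C_y$ for essentially all $x,y$, the ``only if'' direction of your claimed reduction collapses, and no asymptotic information about $x-y$ can be read off. You flagged the forward direction as the main obstacle but diagnosed the wrong failure mode: the danger is not that the planar embedding creates invariants \emph{finer} than the tag parameter, but that the tag parameter leaves no topological trace at all.

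The fix is what the paper does: work with the turbulent translation action of $G_0=\{\vec{x}\in\mathbb{Z}^{\mathbb{N}}: x_n/n\to 0\}$ on $\mathbb{Z}^{\mathbb{N}}$, so that each coordinate is an \emph{integer}, and encode $x_n$ by a discrete, topologically detectable feature --- namely, which rectangle $R_{n,k}$ in the $n$-th column of a nested family of boundary rectangles is filled in (filled rectangles are detected by points having neighborhoods containing a copy of the half-plane, and the index $k$ is recovered by an inductive path-component count). A homeomorphism must then carry $\partial R_{n,0}$ in $F(\vec{x})$ to $\partial R_{n,y_n-x_n}$ in $F(\vec{y})$ while fixing the topologically distinguished point $(0,\tfrac12)$ to which the $\partial R_{n,0}$ accumulate; since the rectangles are placed at heights $f(k/n)$ for a fixed homeomorphism $f:\mathbb{R}\to(0,1)$, convergence of $\partial R_{n,y_n-x_n}$ to $(0,\tfrac12)$ is \emph{equivalent} to $(y_n-x_n)/n\to 0$. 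This is the step your continuity-at-$p_*$ argument was reaching for, but it only works because the encoded datum is discrete and recoverable from the topology. (Your appeal to ``absorbing'' index permutations into an $S_\infty\ltimes c_0$ action would also need justification; the paper instead rigidifies the construction so that the $n$-th stripe is topologically identifiable, making permutations impossible.)
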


Since $\mathsf{C}_2\leq_B \mathsf{H}_2$, the same conclusion holds for $\mathsf{H}_2$. It will be obvious from our construction that it can be used to obtain the same conclusion for $\mathsf{R}_2$. The rest of this subsection is devoted to a proof of Theorem~\ref{C2not}.

Let $G=\mathbb{Z}^\mathbb{N}$. $G$ is a Polish group under the product topology and the product group structure. Let 
$G_0=\{\vec{x}=(x_n)\in G: x_{n}/n\rightarrow 0\}$. $G_0$ is a subgroup of $G$. We equip $G_0$ with a topological structure given by the complete metric:
\[
d(\vec{x},\vec{y})=\sup_{n}|(x_{n}-y_{n})/n|.
\]
Then $G_0$ becomes a Polish group. Consider the action of $G_0$ on $G$ by translation $+$:
\[
\vec{g}\cdot \vec{x}=(g_{n}) + (x_{n})= (g_{n}+x_{n})
\]
for $\vec{g}=(g_{n})\in G_0$ and $\vec{x}=(x_{n})\in G$. The equivalence classes of the orbit equivalence relation are exactly the cosets of $G_0$ in $G$.

\begin{lemma}[\cite{H}]
The action of $G_0$ on $G$ is turbulent. Consequently, the coset equivalence relation of $G_0$ on $G$ is not Borel reducible to the graph isomorphism.
\end{lemma}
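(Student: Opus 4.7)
The plan is to verify Hjorth's three conditions for turbulence of the translation action of $G_0$ on $G$: every orbit is dense, every orbit is meager, and every local orbit is somewhere dense. The stated non-reducibility to the graph isomorphism then follows from Hjorth's general theorem on turbulence.

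Density and meagerness are straightforward. Each orbit is a coset $\vec{x} + G_0$; since $G_0$ contains every eventually-zero sequence and these are dense in $G$ under the product topology, every orbit is dense. For meagerness, observe that
\[
G_0 = \bigcap_{k\geq 1} \bigcup_{N\geq 1} \bigcap_{n\geq N} \{\vec{x}\in G : |x_n| < n/k\},
\]
so $G_0$ is an $F_{\sigma\delta}$ Borel subgroup of $G$. If $G_0$ were non-meager, Pettis's theorem would force $G_0 - G_0 = G_0$ to contain a neighborhood of $\vec{0}$, hence a basic open set $\{\vec{x} : x_i = 0 \text{ for } i<N\}$; but any such set contains the sequence $x_n = n^2$ for $n\geq N$ (and $0$ elsewhere), which fails $x_n/n\to 0$. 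Contradiction.

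The main work is the local orbit condition. Given $\vec{x}\in G$, an open neighborhood of $\vec{x}$, and an open symmetric neighborhood of $\vec{0}$ in $G_0$, I would shrink them to the basic form $U = \{\vec{z} : z_i = x_i \text{ for } i<N\}$ and $V = \{\vec{g}\in G_0 : \sup_n |g_n|/n < \epsilon\}$. Choose $M \geq \max\{N, \lceil 1/\epsilon\rceil + 1\}$ and set $W = \{\vec{z} : z_i = x_i \text{ for } i<M\}$, a nonempty open subset of $U$. The claim is $W\subseteq \overline{\mathcal{O}(\vec{x}, U, V)}$. Given $\vec{z}\in W$ and any basic refinement $W_0 = \{\vec{w} : w_i = z_i \text{ for } i<K\}$ with $K\geq M$, define $\vec{w}$ to equal $\vec{z}$ on coordinates below $K$ and $\vec{x}$ on coordinates at least $K$. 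Then $\vec{w} - \vec{x}$ is supported in $[M,K)$, and one can travel from $\vec{x}$ to $\vec{w}$ by a finite sequence of single-coordinate unit moves $\pm \vec{e}_i$ with $M\leq i<K$. Each such move lies in $V$ because $1/i \leq 1/M < \epsilon$, and every intermediate vector remains in $U$ because only coordinates indexed by $i\geq N$ are ever modified. Hence $\vec{w}\in \mathcal{O}(\vec{x}, U, V)\cap W_0$, completing the local-orbit argument.

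The one delicate point is the choice of $M$: one must make sure that a single-coordinate unit move already lies in the given $V$, and this is exactly what $1/M < \epsilon$ provides. This is precisely the feature of the metric $d$ that permits displacements to grow linearly in the index. Once the three conditions are in place, Hjorth's theorem immediately delivers the non-reducibility of the coset equivalence relation to the graph isomorphism.
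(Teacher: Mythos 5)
Your proof is correct. The paper gives no proof of this lemma---it is quoted directly from Hjorth's book \cite{H}---and your argument (dense orbits via the finitely supported sequences, meagerness of $G_0$ via Pettis's theorem, and the local-orbit verification using single-coordinate unit moves $\pm\vec{e}_i$ with $i\geq M$ and $1/M<\epsilon$) is exactly the standard one for this example, with the key point correctly identified: the metric on $G_0$ lets a unit move in a high coordinate be arbitrarily small in $V$ while leaving the first $N$ coordinates, and hence membership in $U$, undisturbed.
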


To complete the proof it suffices to show that the coset equivalence relation of $G_0$ on $G$ is Borel reducible to $\mathsf{C}_2$. For notational simplicity we will be working with $[-1,1]\times [0,1]$ rather than $[0,1]^{2}$.
We will define a Borel reduction map $F: G\mapsto \mathcal{C}([-1,1]\times [0,1])$ such that, for all $\vec{x}, \vec{y}\in G$, $\vec{x}-\vec{y}\in G_0$ iff $F(\vec{x}), F(\vec{y})$ are homeomorphic.

We first describe a preliminary construction and fix some notation. We define closed rectangles $R_{n,k}$  inside $[0,1]^2$ for $n\geq 1$ and $k\in \mathbb{Z}$. Fix an order-preserving homeomorphism $f:\mathbb{R}\rightarrow (0,1)$ so that $f(0)=\frac{1}{2}$, then $R_{n,k}$ is the rectangle with the vertices
\[
\left(\frac{1}{2n+1},f(\frac{k+1}{n})\right), \hspace{9mm} \left(\frac{1}{2n}, f(\frac{k+1}{n})\right),
\]
\[
\left(\frac{1}{2n+1},f(\frac{k}{n})\right),\mbox{ and } \left(\frac{1}{2n},f(\frac{k}{n})\right).
\]
Figure~\ref{P:1} illustrates this construction.

\begin{figure}[ht]
\centering
    \begin{tikzpicture}[scale=7.5] 
    \foreach \i in {1,...,8}
    \foreach  \j  in {0,...,4} 
    {
       \pgfmathsetmacro{\jm}{1/(1+2^(-\j/\i+1))}
       \pgfmathsetmacro{\jn}{1/(1+2^(-\j/\i))}
       \pgfmathsetmacro{\jx}{1/(2*\i+1)}
       \pgfmathsetmacro{\jy}{1/(2*\i)}
       \draw (\jx,\jm) rectangle (\jy,\jn);
       \draw (\jx,-\jm+1) rectangle (\jy,-\jn+1);
       \draw (\jx,0)--(\jx,1);
       \draw (\jy,0)--(\jy,1);
    }
    \draw [dashed] (-0.7,0)--(0.9,0)--(0.9,1)--(-0.7,1)--(-0.7,0);
    %\draw (-0.7,0.5)--(0.04,0.5);
    \draw (0.04,0)--(0.04,1);
    \draw [fill=red] (0.04,0.5) circle [radius=0.005];
    \node at (-0.01,0.5) [scale=0.65] {$(0,\frac{1}{2})$};
    
    %first strip:
    \node at (0.4167,0.5833) [scale=0.8] {$R_{1,0}$};
    \node at (0.4167,0.4167) [scale=0.8] {$R_{1,-1}$};
    \node at (0.4167,0.7333) [scale=0.8] {$R_{1,1}$};
    \node at (0.4167,0.2667) [scale=0.8] {$R_{1,-2}$};
    \node at (0.4167,0.8444) [scale=0.8] {$R_{1,2}$};
    \node at (0.4167,0.1556) [scale=0.8] {$R_{1,-3}$};
    
    %second strip:
    \node at (0.225,0.5429) [scale=0.4] {$R_{2,0}$};
    \node at (0.225,0.4571) [scale=0.4] {$R_{2,-1}$};
    \node at (0.225,0.6262) [scale=0.4] {$R_{2,1}$};
    \node at (0.225,0.3738) [scale=0.4] {$R_{2,-2}$};
  \end{tikzpicture}
  \caption{The rectangles $R_{n,k}$ for $n\geq 1$ and $k\in\mathbb{Z}$.}
  \label{P:1}
  \end{figure}
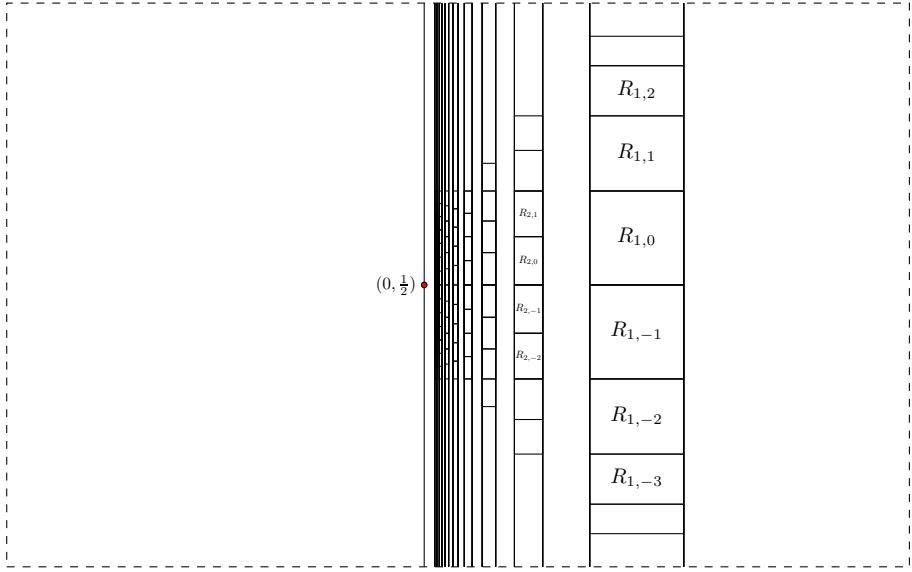
  
We use $\partial R_{n,k}$ and $R_{n,k}^o$ to denote the boundary and the interior of $R_{n,k}$, respectively. 

For any $n\geq 1$ and $k,l\in\mathbb{Z}$, define a homeomorphism $\sigma_{n,k,l}: R_{n,k}\rightarrow R_{n,k+l}$ by
$\sigma_{n,k,l}(a,f(b))=(a,f(b+l/n))$ for $a\in [1/(2n+1), 1/2n]$ and $b\in [k/n, (k+1)/n]$.

We are now ready to define the map $F$. Given $\vec{x}=(x_n)\in G$, let 
\[
F(\vec{x})=I_0\cup \bigcup_{n\geq 1}(I_n^{\vec{x}}\cup C^{\vec{x}}_n)
\]
where $I_{0}:=[-1,0]\times\{1/2\}\cup \{0\}\times[0,1]$, and for each $n\geq 1$,
\[
I^{\vec{x}}_n:=\mbox{the closure of } (R_{n,x_n+1}\cup\bigcup_{k\neq x_n+1} \partial  R_{n,k}),
\]
and 
\[
C^{\vec{x}}_n=\left\{\left(\frac{1}{2n+1+\lambda}, f(\frac{x_n+1/2}{n}(1-\lambda)+\frac{x_{n+1}+1/2}{n+1}\lambda)\right)\,:\,\lambda\in[0,1]\right\}.
\]
The closed set $F(\vec{x})$ consists of three parts: a T-shaped path-component $I_{0}$, a sequence of ``stripes'' $(I^{\vec{x}}_{n})$, and a sequence of curved line segments $(C^{\vec{x}}_n)$ connecting the neighboring stripes. Figure~\ref{F:2} illustrates this construction, and Figure~\ref{F:3} gives a better local view of the $n$-th and the $(n+1)$-st stripes. 

\begin{figure}[h]
\centering
\begin{tikzpicture}[scale=7.5] 
    \foreach \i in {1,...,8}
    \foreach  \j  in {0,...,4} 
    {
       \pgfmathsetmacro{\jm}{1/(1+2^(-\j/\i+1))}
       \pgfmathsetmacro{\jn}{1/(1+2^(-\j/\i))}
       \pgfmathsetmacro{\jx}{1/(2*\i+1)}
       \pgfmathsetmacro{\jy}{1/(2*\i)}
       \draw (\jx,\jm) rectangle (\jy,\jn);
       \draw (\jx,-\jm+1) rectangle (\jy,-\jn+1);
       \draw (\jx,0)--(\jx,1)--(\jy,1)--(\jy,0)--(\jx,0);
    }
    \draw [dashed] (-0.7,0)--(0.9,0)--(0.9,1)--(-0.7,1)--(-0.7,0);
    \draw (-0.7,0.5)--(0.04,0.5);
    \draw (0.04,0)--(0.04,1);
    \draw [fill=red] (0.04,0.5) circle [radius=0.005];
    
  %first strip:  
    \draw [fill=orange] (0.3333,0.8889) rectangle (0.5,0.9412);
    \draw [fill=black] (0.3333,0.8444) circle [radius=0.003];
    
 %second strip:   
    \draw [fill=orange] (0.2,0.3333) rectangle (0.25,0.4142);
    \draw [fill=black] (0.2,0.29727) circle [radius=0.003];
    \draw [fill=black] (0.25,0.29727) circle [radius=0.003];
    \draw plot [smooth, tension=1] coordinates { (0.25,0.29727) (0.3, 0.5) (0.3333,0.8444)};
    
 %third strip:   
    \draw [fill=orange] (0.14286,0.5) rectangle (0.16667,0.55751);
    \draw [fill=black] (0.14286,0.4712) circle [radius=0.003];
    \draw [fill=black] (0.16667,0.4712) circle [radius=0.003];
    \draw plot [smooth, tension=1] coordinates { (0.16667,0.4712) (0.19, 0.4) (0.2,0.29727)};

 %fourth strip   
    \draw [fill=orange] (0.1111,0.6271) rectangle (0.125,0.6667);
    \draw [fill=black] (0.1111,0.60645) circle [radius=0.003];
    \draw [fill=black] (0.125,0.60645) circle [radius=0.003];
    \draw plot [smooth, tension=1] coordinates { (0.14286,0.4712) (0.138,0.55) (0.125,0.60645)};

    \draw plot [smooth,tension=1] coordinates { (0.1111,0.60645)  (0.1,0.2)};
    %\draw [fill=black] (0.1,0.2) circle [radius=0.003];
    
  \end{tikzpicture}
  \caption{The contruction of $F(\vec{x})$.} 
  \label{F:2}
  \end{figure}
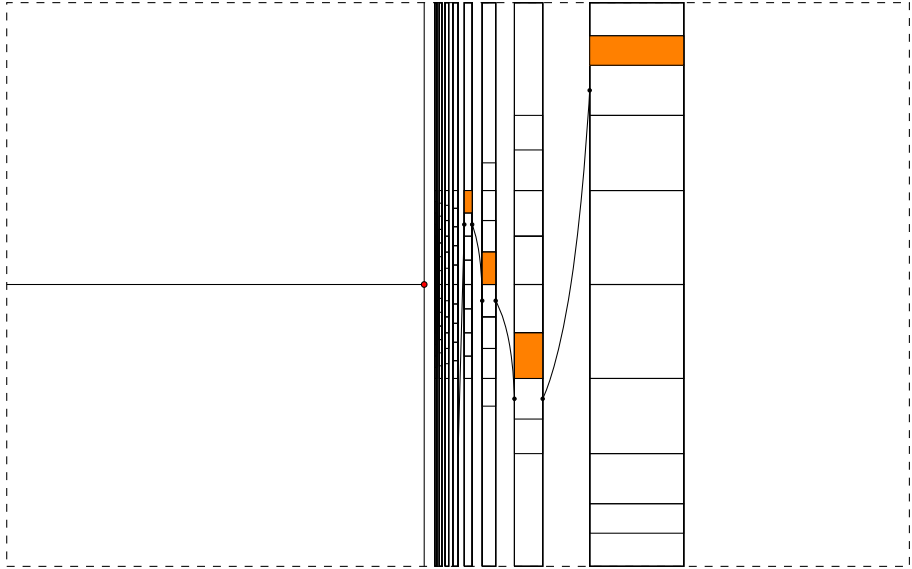

\begin{figure}[h]
\centering
\begin{tikzpicture}
  
  \draw (-2,0) rectangle (-1,1);
  \draw [fill=orange] (-2,0)  rectangle  (-1,1);
\draw (-2,1) rectangle (-1,2);
%\node at (-1.5,0.5) {$I_{2,0}$};
%\node at (-1.5, 1.5) {$I_{2,1}$};
\draw (-2,2) rectangle (-1,2.7);
\draw (-2,2.7) rectangle (-1,3.4);
%\node at (-1.5,2.35) {$I_{2,2}$};
%\node at (-1.5,3.05) {$I_{2,3}$};
\draw (-2,3.4) rectangle (-1,3.4+0.49);
\draw (-2,3.4+0.49) rectangle (-1,3.4+2*0.49);

\draw (-2,4.38) rectangle (-1, 4.723);
\draw (-2,4.723) rectangle (-1, 5.066);

\draw (-2,5.066) rectangle (-1, 5.3061);
\draw (-2,5.3061) rectangle (-1, 5.5462);

\draw (-2,0) rectangle  (-1,-0.6);
\draw (-2,-0.6) rectangle (-1,-1.2);

\draw (-2,-1.2) rectangle (-1, -1.56);
\draw (-2,-1.56) rectangle (-1, -1.92);

\draw (-2, -1.92) rectangle (-1,-2.136);
\draw (-2, -2.136) rectangle (-1,-2.352);
\draw (-2, -2.352) rectangle (-1, -2.4816);
\draw (-2, -2.4816) rectangle (-1, -2.6112);

\draw (0,0) rectangle (2,2);
\node at (1,1) {$R_{n,x_n-1}$};
\draw (0,2) rectangle (2,3.4);
\node at (1,2.7) {$R_{n,x_n}$};
\draw (0,3.4) rectangle (2,3.4+2*0.49);
\draw [fill=orange] (0,3.4) rectangle (2,3.4+2*0.49);
\node at (1, 3.89) {$R_{n,x_n+1}$};
\draw (0,4.38) rectangle (2, 5.066);
\node at (1, 4.723) {$R_{n,x_n+2}$};
\draw (0,5.066) rectangle (2, 5.5462);

\draw (0,0) rectangle (2,-1.2);
%\node at (1, -0.6) {$I_{n,g(n)-2}$};
\draw (0,-1.2) rectangle (2, -1.92);
%\node at (1, -1.56) {\vdots};
\draw (0, -1.92) rectangle (2,-2.352);
\draw (0, -2.352) rectangle (2, -2.6112);

\draw [fill=black] (0,2.7) circle [radius=0.05];
\draw [fill=black] (-1,-0.3) circle [radius=0.05];

\draw plot [smooth, tension=1] coordinates { (-1,-0.3) (-0.7,1.3)(0,2.7)};%(-0.333,1.1)
%\draw (-1,-0.3)--(0,2.7);
\node at (-0.45, 0.8) [scale=0.6]{$C_{n}$};
\draw [fill=black] (2,2.7) circle [radius=0.05];
\draw [fill=black] (-2,-0.3) circle [radius=0.05];

\draw plot [smooth,tension=1] coordinates {(2, 2.7) (2.4,1.5) (3, 0.7)};
\node at (2.9, 1.5) [scale=0.6] {$C_{n-1}$};

\draw plot [smooth, tension=1] coordinates {(-2,-0.3) (-2.7,0.9) (-3.5,1.4)}; 
\node at (-3.2, 0.75)[scale=0.6] {$C_{n+1}$};
\node at (1,6) {$I_{n}^{\vec{x}}$};
\node at (-1.5,6) {$I_{n+1}^{\vec{x}}$};

\end{tikzpicture}
\caption{The ``stripes'' $I_{n}^{\vec{x}}$ and $I_{n+1}^{\vec{x}}$.} 
\label{F:3}
\end{figure}
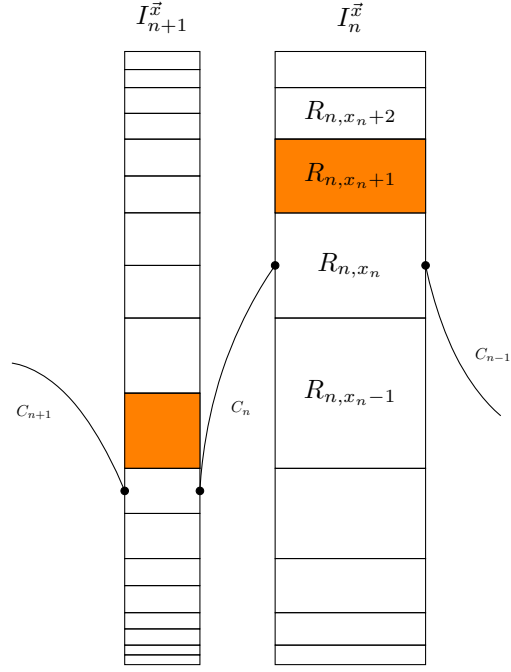

Note that $F(\vec{x})$ thus constructed is a continuum with two path-components as follows: 
\begin{itemize}
    \item $I_0^{\vec{x}}$, where there are exactly three non-cut points of its own.
    \item $\bigcup_{n\geq 1} (I_n^{\vec{x}}\cup C_n^{\vec{x}})$, where there are infinitely many non-cut points.
\end{itemize}

For one direction of the proof, suppose $\vec{x}-\vec{y}\in G_0$, i.e. $|x_n-y_n|/n\rightarrow 0$ as $n\rightarrow \infty$. We show that there exists a homeomorphism between $F(\vec{x})$ and $F(\vec{y})$. Actually, we prove a stronger result by constructing an autohomeomorphism $\varphi$ on $[-1,1]\times[0,1]$ with $\varphi(F(\vec{x}))=F(\vec{y})$. 

We define an autohomeomorphism $\sigma$ on $(0,1)^2$:
\begin{itemize}
\item On the stripes $I_{n}^{\vec{x}}$ for $n\geq 1$, we let $\sigma |_{R_{n,k}}=\sigma_{n,k,y_n-x_n}$ for all $k\in\mathbb{Z}$;
\item In the domains of the form
\[
\left(\frac{1}{2n+2},\frac{1}{2n+1}\right)\times (0,1)
\]
where $n\geq 1$, we let
\[
\sigma\left(\frac{1}{2n+1+\lambda},f(z)\right) \\
=\left(\frac{1}{2n+1+\lambda}, f(z+\frac{(y_n-x_n)(1-\lambda)}{n}+\frac{(y_{n+1}-x_{n+1})\lambda}{n+1})\right).
\]
for all $\lambda \in(0,1)$ and $z\in\mathbb{R}$;
\item In the domain $(1/2,1)\times (0,1)$, we let 
\[
\sigma\left(\frac{1}{1+\lambda},f(z)\right)=\left(\frac{1}{1+\lambda},f(z+(y_1-x_1)\lambda)\right)
\]
for all $\lambda\in(0,1)$ and $z\in\mathbb{R}$.
\end{itemize}
%Finally, on the strip $[-1,0]\times [0,1]$, $\sigma_{0}$ is the identity map.
Note that $\sigma[I^{\vec{x}}_n]=I^{\vec{y}}_n$, $\sigma[C^{\vec{x}}_n]=C^{\vec{y}}_n$, and that $\sigma$ is continuous everywhere in $(0,1)^2$. By the assumption that $|x_n-y_n|/n\rightarrow 0$ as $n\rightarrow \infty$, we have that for $p\in (0,1)^2$, 
\[
d(p,\sigma(p))\rightarrow 0
\]
as $p\rightarrow \partial [0,1]^2$. Hence the above $\sigma$ uniquely extends to an autohomeomorphism $\varphi$ of $[0,1]^2$ so that $\varphi(p)=p$ for $p\in\partial [0,1]^2$. Now further extending this $\varphi$ by an identity on $[-1,0]\times [0,1]$ (and in particular an identity on $I_0$), we obtain an autohomeomorphism of $[-1,1]\times[0,1]$ with $\varphi[F(\vec{x})]=F(\vec{y})$.

For the converse direction, suppose $\pi: F(\vec{x})\to F(\vec{y})$ is a homeomorphism. We want to show that $|x_n-y_n|/n\rightarrow 0$ as $n\rightarrow \infty$. Since $\pi$ maps each path-component of $F(\vec{x})$ to a path-component of $F(\vec{y})$, we have $\pi[I_0^{\vec{x}}]=I_0^{\vec{y}}$ and
\[\pi \left[\bigcup_{n\geq 1} (I_n^{\vec{x}}\cup C_n^{\vec{x}})\right]=\bigcup_{n\geq 1} (I_n^{\vec{y}}\cup C_n^{\vec{y}}).
\]

Next, we restrict our spaces to $\bigcup (I_n^{\vec{x}}\cup C_n^{\vec{x}})$ and $\bigcup (I_n^{\vec{y}}\cup C_n^{\vec{y}})$, respectively, in order to show that each $I_n^{\vec{x}}$ must be mapped to $I_n^{\vec{y}}$, and each $C_n^{\vec{x}}$ must be mapped to $C_n^{\vec{y}}$. 
\begin{claim}
For all $n\geq 1$, $\pi(I_{n}^{\vec{x}})=I_{n}^{\vec{y}}$ and $\pi(C_n^{\vec{x}})=C_n^{\vec{y}}$.
\end{claim}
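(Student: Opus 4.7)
The plan is to identify $I_n^{\vec{x}}$ and $C_n^{\vec{x}}$ topologically inside the path-component $\Delta^{\vec{x}}:=\bigcup_{n\geq 1}(I_n^{\vec{x}}\cup C_n^{\vec{x}})$ by using cut points as the fundamental invariant. Writing $u_m^{\vec{x}}=C_m^{\vec{x}}\cap I_m^{\vec{x}}$ and $v_m^{\vec{x}}=C_m^{\vec{x}}\cap I_{m+1}^{\vec{x}}$ for the two endpoints of $C_m^{\vec{x}}$, I will show that the set of cut points of $\Delta^{\vec{x}}$ is exactly $\bigsqcup_m C_m^{\vec{x}}$ (endpoints included) and that its connected components are the individual arcs $C_m^{\vec{x}}$, while the non-cut locus has the punctured stripes $I_n^{\vec{x}}\setminus\{u_n^{\vec{x}},v_{n-1}^{\vec{x}}\}$ (with $v_0^{\vec{x}}$ read as vacuous) as its connected components. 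Since $\pi$ is a homeomorphism and hence preserves cut points and bijects components, passing to closures in $\Delta^{\vec{y}}$ will yield permutations $\tau,\sigma$ of $\mathbb{N}_{\geq 1}$ with $\pi(I_n^{\vec{x}})=I_{\tau(n)}^{\vec{y}}$ and $\pi(C_m^{\vec{x}})=C_{\sigma(m)}^{\vec{y}}$, and a final adjacency argument on the stripe graph will force $\tau=\sigma=\mathrm{id}$.

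For the cut-point classification, the forward direction is immediate: removing any $p\in C_m^{\vec{x}}$ separates $\Delta^{\vec{x}}$ into a \emph{lower} piece containing $I_1^{\vec{x}},\dots,I_m^{\vec{x}}$ and an \emph{upper} piece containing $I_{m+1}^{\vec{x}},I_{m+2}^{\vec{x}},\dots$, so every point of $C_m^{\vec{x}}$ is a cut point (including the endpoints $u_m^{\vec{x}},v_m^{\vec{x}}$, since they too are the sole link between the two halves). The substantive direction is that for every $p\in I_n^{\vec{x}}\setminus\{u_n^{\vec{x}},v_{n-1}^{\vec{x}}\}$ the punctured stripe $I_n^{\vec{x}}-\{p\}$ remains connected; then $\Delta^{\vec{x}}-\{p\}$ is also connected because the junction points $u_n^{\vec{x}},v_{n-1}^{\vec{x}}$ to the neighboring $C$-arcs are undisturbed. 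The punctured-stripe connectivity will follow from the ``ladder'' structure of $I_n^{\vec{x}}$: its solid left side $\{1/(2n+1)\}\times[0,1]$, solid right side $\{1/(2n)\}\times[0,1]$, horizontal rungs at levels $y=f(k/n)$, and filled rectangle $R_{n,x_n+1}$ together always furnish a detour around any single removed point. I will verify this by a short case analysis covering interior points of the filled rectangle, non-shared and shared rectangle edges, rectangle corners, and the limit points in $[1/(2n+1),1/(2n)]\times\{0,1\}$.

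With the cut-point locus identified, the components of $\bigsqcup_m C_m^{\vec{x}}$ are the individual arcs $C_m^{\vec{x}}$ (pairwise disjoint because their $x$-coordinates lie in the disjoint intervals $[1/(2m+2),1/(2m+1)]$), so $\pi$ bijects these via some $\sigma$; dually, the components of the non-cut locus are the punctured stripes, and taking closures in $\Delta^{\vec{y}}$ gives $\pi(I_n^{\vec{x}})=I_{\tau(n)}^{\vec{y}}$ for some bijection $\tau$. To pin down $\tau$, I will use that each $C_m^{\vec{x}}$ has its endpoints in $I_m^{\vec{x}}$ and $I_{m+1}^{\vec{x}}$, making the incidence graph on $\{I_n^{\vec{x}}\}_{n\geq 1}$ the one-way infinite ray $I_1-I_2-I_3-\cdots$ in which $I_1$ is the unique vertex of degree one; the induced graph automorphism then forces $\tau(1)=1$, inductively $\tau(n)=n$, and then $\sigma(m)=m$ follows from $\{\tau(m),\tau(m+1)\}=\{\sigma(m),\sigma(m)+1\}$. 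The main obstacle is the case analysis verifying that $I_n^{\vec{x}}-\{p\}$ is connected, particularly around the accumulation points on $[1/(2n+1),1/(2n)]\times\{0,1\}$ and at corners where three or four rectangles meet; a related subtlety worth flagging is that $u_m^{\vec{x}},v_m^{\vec{x}}$ are cut points of $\Delta^{\vec{x}}$ yet lie inside $I_m^{\vec{x}}$ and $I_{m+1}^{\vec{x}}$ respectively, so the non-cut locus does not literally decompose into the stripes themselves, and the closure step at the end is essential.
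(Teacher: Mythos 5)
Your proposal is correct and follows essentially the same route as the paper: both identify $\bigcup_m C_m^{\vec{x}}$ as exactly the cut-point locus of the path-component $\bigcup_n (I_n^{\vec{x}}\cup C_n^{\vec{x}})$ and then recover the individual arcs and stripes as its (path-)components and the closures of the components of the complement. The only divergence is in how the indexing is pinned down --- the paper characterizes $C_n^{\vec{u}}$ by counting path-components of the cut-point set of a complementary path-component and then characterizes $I_n^{\vec{u}}$ inductively, whereas you use that the incidence graph of the stripes is a one-way infinite ray whose unique degree-one vertex $I_1^{\vec{x}}$ forces the induced automorphism to be the identity; both work, and your version has the added merit of making explicit the ladder-connectivity case analysis (non-cut-ness of non-junction stripe points) that the paper only asserts.
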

\begin{proof}[Proof of Claim:]
Note that for any $\vec{u}\in G$, $\bigcup C_n^{\vec{u}}$ are exactly the set of all cut-points in $\bigcup (I_n^{\vec{u}}\cup C_n^{\vec{u}})$. Therefore we must have
\[
\pi\left[\bigcup C_n^{\vec{x}}\right]= \bigcup C_n^{\vec{y}}.
\]
Note that for each $\vec{u}\in G$ and $n\geq 1$, $C_n^{\vec{u}}$ is a path-component of $\bigcup C_n^{\vec{u}}$. In fact, $C_n^{\vec{u}}$ can be topologically characterized as the unique path-component $C$ of $\bigcup C_n^{\vec{u}}$ so that $F(\vec{u})-C$ contains a path-component $D$ such that the set of all cut-points of $D$ has exactly $n-1$ many path-components. It follows that for all $n\geq 1$, $\pi[C^{\vec{x}}_n]=C^{\vec{y}}_n$.

Now each $I^{\vec{u}}_n$ can be topologically characterized inductively as follows. $I^{\vec{u}}_1$ is the unique path-component of $F(\vec{u})-(C^{\vec{u}}_1)^o$ without cut-points. For $n>1$, $I^{\vec{u}}_n$ is the unique path-component of 
$$F(\vec{u})-\left(C^{\vec{u}}_n\cup \bigcup_{i<n}(C_i^{\vec{u}}\cup I^{\vec{u}}_i)\right)^o $$
without cut-points. Thus for all $n\geq 1$, $\pi[C^{\vec{x}}_n]=C^{\vec{y}}_n$.
\end{proof}

%Now that we know all the ``strips'' $I_{n}^{g}$ (or $I_{n}^{h}$) are $\pi$ invariant, we can consider all the rectangles (either $\partial I_{n,k}^{g}$ for $k\neq g(n)+1$ or $I_{n,g(n)+1}^{g}$). In the next claim, we show that $\pi$ maps $I_{n,g(n)+1}^{g}$ onto $I_{n,h(n)+1}^{h}$, and correspondingly, maps $\partial I_{n,k}^{g}$ onto $\partial I_{n,k+h(n)-g(n)}^{h}$ for $k\neq g(n)+1$. 

\begin{claim}
For all $n\geq 1$ and $k\in \mathbb{Z}$, $\pi[\partial R_{n,k}]=\partial R_{n,k+y_n-x_n}$ and $\pi[R_{n,x_n+1}]=R_{n,y_n+1}$.
\end{claim}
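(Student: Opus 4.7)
The plan is to exploit two topologically distinguished features of the ladder $I_n^{\vec{x}}$: the unique filled rectangle $R_{n,x_n+1}$, and the attachment points of the curves $C_n^{\vec{x}}$ which are transported by the previous claim.

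First, I would characterize $R_{n, x_n+1}$ topologically within $I_n^{\vec{x}}$ as the closure of the set of points admitting a neighborhood in $I_n^{\vec{x}}$ homeomorphic to an open disk in $\mathbb{R}^2$. Points of $R_{n, x_n+1}^o$ plainly admit such neighborhoods, while every other point of $I_n^{\vec{x}}$—whether on a rail, on a rung, at a corner, or on the boundary of the filled rectangle—admits only a 1-dimensional or closed-half-plane type neighborhood, never an open disk. Since this characterization is purely topological and $\pi$ restricts to a homeomorphism of $I_n^{\vec{x}}$ onto $I_n^{\vec{y}}$ by the previous claim, we conclude $\pi[R_{n, x_n+1}] = R_{n, y_n+1}$, and in particular $\pi[\partial R_{n, x_n+1}] = \partial R_{n, y_n+1}$.

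Next, I would use $C_n^{\vec{x}}$ to pin down the image of the neighboring cell $\partial R_{n, x_n}$. From the definition, $C_n^{\vec{x}} \cap I_n^{\vec{x}}$ is the single point $p_n^{\vec{x}} := (1/(2n+1), f((x_n+1/2)/n))$, which lies in the interior of the left edge of $R_{n, x_n}$ and so belongs to $\partial R_{n, x_n}$ and to no other $\partial R_{n,k}$. Since $\pi[C_n^{\vec{x}}] = C_n^{\vec{y}}$, $\pi$ must send $p_n^{\vec{x}}$ to $p_n^{\vec{y}} = (1/(2n+1), f((y_n+1/2)/n))$, which lies on $\partial R_{n, y_n}$ alone. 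In particular $\pi[\partial R_{n, x_n}]$ must contain $p_n^{\vec{y}}$.

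Finally, I would combine these via the ladder structure. After removing the 2-dimensional piece, $I_n^{\vec{x}} \setminus R_{n, x_n+1}^o$ is a 1-dimensional ladder graph whose cells are the circles $\partial R_{n, k}$ for $k \in \mathbb{Z}$, each linked to its neighbors by a shared edge; any self-homeomorphism of such a ladder acts on this $\mathbb{Z}$-indexed family of cells as either an order-preserving shift $k \mapsto k+c$ or an order-reversing reflection $k \mapsto c-k$. The two constraints established above—that index $x_n+1$ goes to $y_n+1$ and that index $x_n$ goes to $y_n$—are simultaneously consistent only with the shift $k \mapsto k + (y_n - x_n)$; the reflection alternative fails for arithmetic reasons. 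Hence $\pi[\partial R_{n, k}] = \partial R_{n, k + y_n - x_n}$ for all $k \in \mathbb{Z}$, completing the proof. The expected main obstacle is the case analysis underpinning Step 1: one must carefully verify at each type of point (interior of a rung, interior of a rail segment, a corner, a point on an edge of the filled rectangle) that the local model in $I_n^{\vec{x}}$ is never homeomorphic to an open disk. This is elementary but requires attention, and is what ultimately makes the topological tag ``closure of disk-points'' isolate exactly the filled rectangle.
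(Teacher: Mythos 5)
Your proposal is correct, and its two anchors are exactly the ones the paper uses: the filled rectangle $R_{n,x_n+1}$, pinned down by a local--dimension criterion (the paper characterizes it via neighborhoods containing copies of the closed half-plane, you via the closure of the locally-Euclidean points --- both work), and the unique attachment point $p_n^{\vec{x}}=C_n^{\vec{x}}\cap I_n^{\vec{x}}$ on the left edge of $\partial R_{n,x_n}$, which transports to $p_n^{\vec{y}}$ by Claim~1. Where you diverge is the propagation step: the paper peels the ladder one square at a time by an explicit induction, repeatedly identifying $l_k\cup r_k$ as the cut-point set of the remaining piece and splitting off $b_k$, whereas you invoke the combinatorial rigidity of the infinite ladder --- that a homeomorphism permutes the $\mathbb{Z}$-indexed cells by a shift or a reflection --- and then rule out the reflection arithmetically using both anchors. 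Your route is cleaner and shorter, but the rigidity assertion is the one place that needs an actual argument rather than a citation: you should say why a homeomorphism of $I_n^{\vec{x}}\setminus R_{n,x_n+1}^o$ onto $I_n^{\vec{y}}\setminus R_{n,y_n+1}^o$ respects the cell decomposition at all (e.g., branch points, being the points with exactly three arc-germs, are preserved, hence edges map to edges and $4$-cycles to $4$-cycles, and the automorphisms of the ladder graph act on $4$-cycles as $k\mapsto k+c$ or $k\mapsto c-k$). You should also account for the fact that $I_n^{\vec{x}}$ is a \emph{closure}, so it contains two limit segments at heights $0$ and $1$ (accumulation of the rungs) at which the space is not locally connected; these must be seen not to disturb the branch-point analysis. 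With those two points spelled out, your proof is complete and in essence a streamlined version of the paper's.
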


\begin{proof}[Proof of Claim:] We only show the case when $n=1$. The case $n\geq 2$ is proved with the same argument. By the last claim, we know $\pi[I^{\vec{x}}_1]=I^{\vec{y}}_1$ and $\pi[C^{\vec{x}}_1]=C^{\vec{y}}_1$. However, note that $I^{\vec{x}}_1$ intersects $C^{\vec{x}}_1$ at a unique point, namely $p^{\vec{x}}=(1/3, f(x_1+1/2))$. Similarly, $I^{\vec{y}}_1\cap C^{\vec{y}}_1=\{p^{\vec{y}}\}$, where $p^{\vec{y}}=(1/3, f(y_1+1/2))$. This implies that $\pi(p^{\vec{x}})=p^{\vec{y}}$.

Before continuing, we introduce some additional notation. We think of the boundary of $R_{1,k}$ being divided into four parts: the ``left'' side will be denoted by $l_{k}$, the ``right'' side by $r_{k}$, and the ``bottom'' side by $b_{k}$. See Figure \ref{F:4}. With these, the ``top'' side of the boundary of $R_{1,k}$ is $b_{k+1}$.  $p^{\vec{x}}$ is on the side $l_{x_1}$.
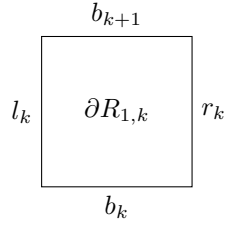
\begin{figure}[h]
\centering
\begin{tikzpicture}
\draw (0,0) rectangle node {$\partial R_{1,k}$} (2,2);
\node at (1,2) [above] {$b_{k+1}$};
\node at (0,1) [left] {$l_{k}$};
\node at (2,1) [right] {$r_{k}$};
\node at (1,0) [below] {$b_{k}$};
\end{tikzpicture}
\caption{The boundaries of the rectangle $R_{1,k}$}
\label{F:4}
\end{figure}

Now the set of all cut-points of $I^{\vec{x}}_1-\{p^{\vec{x}}\}$ consists of exactly $l_{x_1}\cup r_{x_1}$, and each of $l_{x_1}$ and $r_{x_1}$ is a path-component of $l_{x_1}\cup r_{x_1}$. Similarly, $p^{\vec{x}}$ is on the side $l_{y_1}$, while $l_{y_1}, r_{y_1}$ are the two path-components of the set of all cut-points of $I^{\vec{y}}_1-\{p^{\vec{y}}\}$. This implies that $\pi[l_{x_1}]=l_{y_1}$ and $\pi[r_{x_1}]=r_{y_1}$. 

Note that $I^{\vec{x}}_1-(l_{x_1}\cup r_{x_1})$ contains exactly three components:
\begin{itemize}
\item $b_{x_1}^o$, which contains only cut-points;
\item $R_{1, x_1+1}\cup\bigcup_{k>x_1+1}\partial R_{1, k}$, which contains only non-cut points;
\item $\bigcup_{k<x_1}\partial R_{1,k}-b_{x_1}$, which contains both cut-points and non-cut points.
\end{itemize}
Moreover, $R_{1, x_1+1}$ consists of exactly the points $p$ in $I^{\vec{x}}_1$ such that any neighborhood of $p$ contains a homeomorphic copy of the upper half plane $\mathbb{R}\times [0,+\infty)$. 

All of this analysis can be done similarly on the $\vec{y}$ side. It follows that we must have $\pi[b_{x_1}]=b_{y_1}$, $\pi[R_{1, x_1+1}]=R_{1, y_1+1}$,
$$\pi[\bigcup_{k>x_1+1}\partial R_{1,k}-b_{x_1+2}]=\bigcup_{k>y_1+1}\partial R_{1,k}-b_{y_1+2},$$ and $$\pi[\bigcup_{k<x_1}\partial R_{1,k}-b_{x_1}]=\bigcup_{k<y_1}\partial R_{1,k}-b_{y_1}.$$

Note that 
$$\bigcup_{k>x_1+1}\partial R_{1,k}-b_{x_1+2}=l_{x_1+2}^o\cup r_{x_1+2}^o\cup \bigcup_{k>x_1+2} \partial R_{1,k},$$
and $l_{x_1+2}$ and $r_{x_1+2}$ are the two path-components of the set of all cut-points of the above set. From this we get $\pi[l_{x_1+2}\cup r_{x_1+2}]=l_{y_1+2}\cup r_{y_1+2}$, $\pi[b_{x_1+3}]=b_{y_1+3}$ and 
$$ \pi[\bigcup_{k>x_1+2}\partial R_{1,k}-b_{x_1+3}]=\bigcup_{k>y_1+2}\partial R_{1,k}-b_{y_1+3}.$$
A repetition of the argument shows that $\pi[\partial  R_{1,k}]=\partial R_{1, k+y_1-x_1}$ for all $k>x_1+1$.

A similar argument shows that $\pi[\partial R_{1,k}]=\partial R_{1,k+y_1-x_1}$ for all $k<x_1$. The claim is thus proved. 
\end{proof}

Finally, we look back at the path-component $I_0$ in $F(\vec{x})$ and in $F(\vec{y})$. We have $\pi[I_0]=I_0$. Notice that $(0,1/2)$ is a distinguished by the topological property that it is the unique cut-point in $I_{0}$ so that removing it will result in three path-components. Therefore,  $\pi$ fixes the point $(0,1/2)$. From Claim 2 above, we have $\pi(\partial R_{n,0}^{\vec{x}})=\partial R_{n,y_n-x_n}^{\vec{y}}$ for all $n\geq 1$. As $n\rightarrow \infty$, $\partial R_{n,0}^{\vec{x}}$ converges to the fixed point $(0,1/2)$, so we must have that $\partial R_{n, y_n-x_n}^{\vec{y}}$ converges also to $(0,1/2)$. This implies that $|y_n-x_n|/n\rightarrow 0$.

\end{document}